\definecolor{b}{rgb}{0,0,1}
\theoremstyle{plain}
\newtheorem{theorem}{Theorem}
\newtheorem{corollary}{Corollary}
\newtheorem{lemma}{Lemma}
\theoremstyle{definition}
\newtheorem{definition}{Definition}
\newtheorem{example}{Example}
\newtheorem{remark}{Remark}
\journal{Arxiv}
\begin{document}

\begin{frontmatter}



\title{Lyapunov functions for nabla discrete \\ [3pt] fractional order systems}
\phantomsection
\addcontentsline{toc}{title}{Lyapunov functions for nabla discrete fractional order systems}


\author{Yiheng Wei}
\author{Yuquan Chen}
\author{Tianyu Liu}
\author{Yong Wang}
\address{Department of Automation, University of Science and Technology of China, Hefei, 230026, China}

\begin{abstract}
This paper focuses on the fractional difference of Lyapunov functions related to Riemann--Liouville, Caputo and Gr\"{u}nwald--Letnikov definitions. A new way of building Lyapunov functions is introduced and then five inequalities are derived for each definition. With the help of the developed inequalities, the sufficient conditions can be obtained to guarantee the asymptotic stability of the nabla discrete fractional order nonlinear systems. Finally, three illustrative examples are presented to demonstrate the validity and feasibility of the proposed theoretical results.
\end{abstract}

\begin{keyword}
Discrete fractional calculus; Lyapunov function; Asymptotic stability; Young inequality
\end{keyword}

\end{frontmatter}



\section{Introduction}\label{Section 1}
Present days, fractional calculus finds more attention in the science and engineering aspects because of its significant characteristics, such as nonlocality, long memory, and self-similarity, etc \cite{Wei:2017FCAAb,Sun:2018CNSNS,Wei:2019CNSNS}. In consideration of the outstanding capability of fractional calculus, many scholars and engineers have devoted themselves to such a potential field. Thanks to their research efforts, a large volume of papers have been published, covering system modeling \cite{Cheng:2018SP}, automatic control \cite{Zhou:2017ISA}, adaptive filtering \cite{Chen:2017AMC} and so forth.

Stability as an important index of control systems receives wide attention, and various kinds of stability issues of fractional order systems have been studied \cite{Wei:2017FCAAa,Alagoz:2017ISA}. It is no doubt that Lyapunov direct method plays a central role in the investigation of the stability, and one needs to calculate fractional derivatives of the Lyapunov functions according to previous works \cite{Li:2009Auto}. However, the well-known Leibniz rule does not hold for fractional derivatives \cite{Tarasov:2013CNSNS}, which brings great difficulty to stability analyses. In \cite{Trigeassou:2011SP,Chen:2017NODY}, the authors took two steps to judge the stability of a fractional order system. First, derive the equivalent frequency distributed model and construct the Lyapunov function with the exact state instead of the pseudo state \cite{Trigeassou:2013CMA}. Second, take an integer order derivative to the Lyapunov function. However, their main results are established on the basis of an unproven expression, i.e., if the exact state is convergent, the pseudo one will converge too.

Fortunately, an effective method was proposed to estimate the upper bound for fractional derivative of Lyapunov functions, which did not need to calculate the infinite sum \cite{Aguila:2014CNSNS}. In fact, the prototype of this method can be found in Corollary 1 of \cite{Alikhanov:2012AMC}. This method has attracted great attention and been widely used. To extend the method from scalar case to vector case, a general inequality was presented in \cite{Duarte:2015CNSNS}. To further enrich the form of Lyapunov functions, much excellent work has been done \cite{Ding:2015CTA,Fernandez:2017CNSNS,Fernandez:2018CNSNS,Dai:2017NODY}. Notably, those results only focus on the Caputo fractional derivative except \cite{Dai:2017NODY}. Though the Riemann--Liouville case was considered, the obtained result was still far from complete. Afterwards, a modified result was developed in \cite{Liu:2016NODY}. Likewise, some pioneering work was done for the discrete case with either Caputo definition \cite{Baleanu:2017CNSNS} or Riemann--Liouville definition \cite{Wu:2017AMC}. However, to our best knowledge, there was a gap in the literature concerning the fractional difference of Lyapunov functions. The inequalities on the Gr\"{u}nwald--Letnikov fractional difference of Lyapunov function are still scarce. More approaches for designing the required Lyapunov functions are expected.

Motivated by the aforementioned discussions, this study concerns with the fractional difference of Lyapunov functions. The main contributions are concluded here.
\begin{enumerate}[i)]
    \item A new way to examine the asymptotic stability of nabla discrete fractional order systems is proposed;
    \item Five inequalities on the fractional difference of Lyapunov functions are systematically developed;
    \item The applicability of the proposed results for three different definitions is rigorously demonstrated.
\end{enumerate}

The layout of this paper is organized as follows. Section \ref{Section 2} reviews the required definitions and some critical facts. The establishment of several essential inequalities for widely used fractional difference definitions is reported in Section \ref{Section 3}. In Section \ref{Section 4}, to further evaluate the effectiveness and practicability of the developed results, numerical simulation is performed. Finally, Section \ref{Section 5} draws the conclusions.

\section{Preliminaries}\label{Section 2}

In this section, some definitions of discrete fractional calculus and fundamental knowledge are introduced.

\begin{definition}\label{Definition 1}
The $n$-th integer order backward difference of a function $f:{\mathbb N}_{1-n}\to\mathbb{R}$ is defined as \cite{Ostalczyk:2015Book}
\begin{equation}\label{Eq1}
{\textstyle{\nabla}^{ n } f\left( k \right) \triangleq \sum^{n}_{j=0}(-1)^j \big( {\begin{smallmatrix}
n \\
j
\end{smallmatrix}}\big)f\left( k-j \right),}
\end{equation}
where $ n\in {\mathbb Z}_{+} $, $ k\in {\mathbb Z}_{+}$, $ {\mathbb N}_{1-n} \triangleq  \left\{ { 1-n,2-n,3-n, \cdots } \right\} $, $ \big( {\begin{smallmatrix}
p \\
q
\end{smallmatrix}}\big) \triangleq \frac{\Gamma (p + 1) }{ \Gamma ( q+1 ) \Gamma ( p - q + 1 ) }$ and $ \Gamma ( x ) \triangleq \int_{\rm{0}}^{{\rm{ + }}\infty } {{{\rm{e}}^t}{t^{x - 1}}} {\rm d}t $.
\end{definition}

Under this definition, the Leibniz rule of $1$-th backward difference can be expressed as
\begin{equation}\label{Eq2}
{\textstyle\begin{array}{rl}
\nabla \left\{ {f\left( k \right)g\left( k \right)} \right\}
 =&\hspace{-6pt} \nabla f\left( k \right)g\left( k \right) + f\left( {k - 1} \right)\nabla g\left( k \right)\\
 = &\hspace{-6pt} \nabla f\left( k \right)g\left( {k - 1} \right) + f\left( k \right)\nabla g\left( k \right),
\end{array}}
\end{equation}
from which the formulas of summation by parts follow
\begin{equation}\label{Eq3}
{\textstyle\sum\nolimits_{j = a}^k {f\left( {j - 1} \right)\nabla g\left( j \right)}  = f\left( j \right)g\left( j \right)|^k_{a-1}  - \sum\nolimits_{j = a}^k {\nabla f\left( j \right)g\left( j \right).} }
\end{equation}
\begin{equation}\label{Eq4}
{\textstyle\sum\nolimits_{j = a}^k {f\left( {j} \right)\nabla g\left( j \right)}  = f\left( j \right)g\left( j \right)|^k_{a-1}  - \sum\nolimits_{j = a}^k {\nabla f\left( j \right)g\left( j-1 \right)} ,}
\end{equation}

\begin{definition}\label{Definition 2}
The $\alpha$-th Gr\"{u}nwald--Letnikov difference/sum of a function $ f:{\mathbb N}_a \to {\mathbb R}$ is defined by \cite{Ostalczyk:2015Book}
\begin{equation}\label{Eq5}
{\textstyle{}_{a}^{\rm G}{\nabla}_{k}^{\alpha} f\left( k \right) \triangleq  \sum\nolimits_{j = 0}^{k-a} {{{\left( { - 1} \right)}^j}\big( {\begin{smallmatrix}
\alpha \\
j
\end{smallmatrix}} \big)f\left( {k - j} \right)},}
\end{equation}
where $\alpha<0$, $a\in\mathbb{R}$ and $ k\in {\mathbb N}_{a} $.
\end{definition}

For convenience, let $ t^{\overline {r} } \triangleq \frac{{\Gamma \left( {t + r} \right)}}{{\Gamma \left( {t} \right)}} $, $t\in\mathbb{N}$, $r\in\mathbb{R}$. Then the formula in (\ref{Eq5}) can be equivalently expressed as
\begin{equation}\label{Eq6}
{\textstyle\begin{array}{rl}
{}_{a}^{\rm G}{\nabla}_{k}^{\alpha} f\left( k \right) =&\hspace{-6pt}  \sum\nolimits_{j = 0}^{k-a} {\frac{{{{\left( {j + 1} \right)}^{\overline { - \alpha  - 1} }}}}{{\Gamma \left( { - \alpha } \right)}}f\left( {k - j}
\right)}\\
=&\hspace{-6pt}  \sum\nolimits_{j = a}^{k} {\frac{{{{\left( {k-j + 1} \right)}^{\overline { - \alpha  - 1} }}}}{{\Gamma \left( { - \alpha } \right)}}f\left( {j}\right)}.
\end{array}}
\end{equation}

\begin{definition}\label{Definition 3}
Assume $f : \mathbb{N}_{a-n} \to \mathbb{R}$, $\alpha \in (n-1,n)$ and $n\in\mathbb{Z}_+$. Then the nabla Riemann--Liouville and Caputo fractional differences (if they exist) is defined as \cite{Ostalczyk:2015Book}
\begin{equation}\label{Eq7}
{}_a^{\rm R}{\nabla}_k ^\alpha f\left( k \right) \triangleq {\nabla ^n}{}_a^{\rm G}{\nabla}_k ^{ \alpha-n }f\left(k \right),
\end{equation}
\begin{equation}\label{Eq8}
{}_a^{\rm C}{\nabla }_k ^\alpha f\left( k \right) \triangleq {}_a^{\rm G}{\nabla}_k ^{ \alpha-n }{\nabla ^n}f\left( k \right),
\end{equation}
respectively.
\end{definition}

With Definition \ref{Definition 2} and Definition \ref{Definition 3}, one has the initial value at $k=a$
\begin{equation}\label{Eq9}
{\textstyle {}_a^{\rm G}{\nabla}_k ^\alpha f\left( k \right)\big|_{k=a}=f\left( a \right),}
\end{equation}
\begin{equation}\label{Eq10}
{\textstyle {}_a^{\rm R}{\nabla}_k ^\alpha f\left( k \right)\big|_{k=a}=0,}
\end{equation}
\begin{equation}\label{Eq11}
{\textstyle {}_a^{\rm C}{\nabla}_k ^\alpha f\left( k \right)\big|_{k=a}={\nabla ^n}f\left( a \right).}
\end{equation}

The reference \cite{Ostalczyk:2015Book} also shows that
\begin{equation}\label{Eq12}
{\textstyle {}_a^{\rm{R}}\nabla _k^\alpha f\left( k \right) = {}_a^{\rm{C}}\nabla _k^\alpha f\left( k \right) + \sum\nolimits_{j = 0}^{n - 1} {\frac{{{{\left( {k - a + 1} \right)}^{\overline {j - \alpha } }}}}{{\Gamma \left( {j - \alpha  + 1} \right)}}} {\nabla ^j}f\left( {a - 1} \right)}.
\end{equation}

Before ending up the preliminaries, the Young inequality is given here, which plays an important role in the work.
\begin{lemma}\label{Lemma 1}
If $p,q>1$ and $\frac{1}{p}+\frac{1}{q}=1$, then \cite{Young:1936AM}
\begin{equation}\label{Eq13}
{\textstyle ab\le\frac{1}{p}a^p+\frac{1}{q}b^q},
\end{equation}
holds for any $a,b\ge0$. The equality holds if and only if $a^p=b^q$.
\end{lemma}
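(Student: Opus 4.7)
The plan is to reduce the inequality to a standard convexity (Jensen) statement for the exponential function, which sidesteps any case analysis on the relative sizes of $a$ and $b$. First I would dispose of the degenerate cases: if $a=0$ or $b=0$, then the left side $ab$ equals $0$ while the right side $\tfrac{1}{p}a^p+\tfrac{1}{q}b^q$ is nonnegative, so the inequality holds trivially; moreover it becomes an equality exactly when both terms vanish, which matches the condition $a^p=b^q=0$.

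For the main case $a,b>0$, the key identity is
\begin{equation*}
ab=\exp\!\bigl(\log a+\log b\bigr)=\exp\!\Bigl(\tfrac{1}{p}\log a^{p}+\tfrac{1}{q}\log b^{q}\Bigr),
\end{equation*}
which is legitimate because $\tfrac{1}{p}+\tfrac{1}{q}=1$ allows us to view $\tfrac{1}{p}$ and $\tfrac{1}{q}$ as convex weights. Applying the convexity of the exponential (Jensen's inequality with two points) to $x=\log a^{p}$ and $y=\log b^{q}$ then yields
\begin{equation*}
\exp\!\Bigl(\tfrac{1}{p}x+\tfrac{1}{q}y\Bigr)\le\tfrac{1}{p}e^{x}+\tfrac{1}{q}e^{y}=\tfrac{1}{p}a^{p}+\tfrac{1}{q}b^{q},
\end{equation*}
which is exactly the Young inequality. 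For the equality case, strict convexity of $\exp$ forces $x=y$, i.e.\ $p\log a=q\log b$, which is equivalent to $a^{p}=b^{q}$.

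As a backup route in case one prefers a calculus-only argument, I would fix $b>0$ and study $\varphi(a)=\tfrac{1}{p}a^{p}+\tfrac{1}{q}b^{q}-ab$ on $[0,\infty)$. Then $\varphi'(a)=a^{p-1}-b$ vanishes at $a_{\star}=b^{1/(p-1)}$, and $\varphi''>0$ gives a global minimum there. Using the conjugacy relation $(p-1)q=p$, one checks $a_{\star}^{p}=b^{q}$ and substitution yields $\varphi(a_{\star})=0$, so $\varphi(a)\ge 0$ for all $a\ge 0$ with equality iff $a^{p}=b^{q}$.

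I do not anticipate a genuine obstacle: the inequality is classical and both approaches are short. The only subtle point worth pausing over is the equality clause, where one must invoke \emph{strict} convexity of $\exp$ (or the second-derivative sign of $\varphi$) rather than just convexity, in order to conclude that $a^{p}=b^{q}$ is not only sufficient but necessary.
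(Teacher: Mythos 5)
Your proof is correct: the Jensen/convexity argument for $\exp$ is a standard and complete derivation of Young's inequality, the degenerate cases are handled properly, and you rightly flag that the equality clause needs \emph{strict} convexity. Note, however, that the paper offers no proof of this lemma at all --- it is stated as a classical fact with a citation to Young's 1936 paper --- so there is nothing in the source to compare your argument against; either of your two routes (the convexity one or the calculus one with $\varphi(a)=\tfrac{1}{p}a^{p}+\tfrac{1}{q}b^{q}-ab$) would serve as a self-contained justification if one were wanted.
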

\section{Main Results}\label{Section 3}
This section presents three theorems of Lyapunov inequality for the fractional difference, which play an essential role in the stability analysis of discrete fractional order systems.
\subsection{Lyapunov inequalities}
\begin{theorem}\label{Theorem 1} The following inequalities hold
\begin{equation}\label{Eq14}
{\textstyle {}_a^{\rm C}\nabla _k^\alpha {x^{2m}}\left( k \right) \le 2x^m\left( k \right){}_a^{\rm C}\nabla _k^\alpha x^m\left( k \right)},
\end{equation}
\begin{equation}\label{Eq15}
{\textstyle {}_a^{\rm C}\nabla _k^\alpha {x^{\frac{{2m}}{n}}}\left( k \right) \le \frac{{2m}}{{2m - n}}x\left( k \right){}_a^{\rm C}\nabla _k^\alpha {x^{\frac{{2m}}{n} - 1}}\left( k \right)},
\end{equation}
\begin{equation}\label{Eq16}
{\textstyle {}_a^{\rm C}\nabla _k^\alpha {x^{\frac{{2m}}{n}}}\left( k \right) \le \frac{{2m}}{n}{x^{\frac{{2m}}{n} - 1}}\left( k \right){}_a^{\rm C}\nabla _k^\alpha x\left( k \right)},
\end{equation}
\begin{equation}\label{Eq17}
{\textstyle {}_a^{\rm C}\nabla _k^\alpha {x^{{2^m}}}\left( k \right) \le {2^m}{x^{{2^m} - 1}}\left( k\right){}_a^{\rm C}\nabla _k^\alpha x\left( k \right)},
\end{equation}
\begin{equation}\label{Eq18}
{\textstyle {}_a^{\rm C}\nabla _k^\alpha {y^{\rm{T}}}\left( k \right)Py\left( k \right) \le 2{y^{\rm{T}}}\left(k \right)P {}_a^{\rm C}\nabla _k^\alpha y\left( k \right)},
\end{equation}
for any $0<\alpha<1$, $m\in\mathbb{Z}_+$, $n\in\mathbb{Z}_+$, $2m\ge n$, $x(k)\in\mathbb{R}$, $y(k)\in\mathbb{R}^\kappa$, $\kappa\in\mathbb{Z}_+$, $ k\in {\mathbb N}_{a} $, $a\in\mathbb{R}$ and positive definite matrix $P\in\mathbb{R}^{\kappa\times \kappa}$.
\end{theorem}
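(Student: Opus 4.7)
The plan is to reduce all five inequalities to the same template. For $0<\alpha<1$, (\ref{Eq6}) lets me write
\begin{equation*}
{}_a^{\rm C}\nabla_k^\alpha f(k) = \sum\nolimits_{j=a}^{k} K(k,j)\,\nabla f(j),\qquad K(k,j) := \frac{(k-j+1)^{\overline{-\alpha}}}{\Gamma(1-\alpha)}.
\end{equation*}
I would then rewrite each ``RHS $-$ LHS'' as $\sum K(k,j)\,\nabla\Phi_j$ for a suitable scalar sequence $\Phi_j$, and extract the sign by the summation-by-parts formula (\ref{Eq4}). Three elementary facts about the kernel do all of the real work: $K(k,k)=1$, $K(k,a-1)>0$, and $\nabla_j K(k,j)=\tfrac{\alpha}{k-j+1}K(k,j)\ge 0$ (so $K(k,\cdot)$ is non-decreasing in its second argument, and $\sum_{j=a}^{k}\nabla_j K(k,j)=1-K(k,a-1)$ by telescoping).

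For (\ref{Eq14}) I would pick $\Phi_j = (x^m(k)-x^m(j))^2$ and verify the telescoping identity
\begin{equation*}
2x^m(k)\,\nabla x^m(j) - \nabla x^{2m}(j) = \Phi_{j-1} - \Phi_j,
\end{equation*}
obtained by writing $x^{2m}(j)-x^{2m}(j-1)=(x^m(j)-x^m(j-1))(x^m(j)+x^m(j-1))$ and completing the square about $x^m(k)$. Summation-by-parts then turns the difference $2x^m(k)\,{}_a^{\rm C}\nabla_k^\alpha x^m(k) - {}_a^{\rm C}\nabla_k^\alpha x^{2m}(k)$ into $K(k,a-1)\Phi_{a-1} + \sum_{j=a}^{k}\nabla_j K(k,j)\,\Phi_{j-1}$, which is manifestly non-negative because $\Phi_j\ge 0$ (the top boundary term drops out via $\Phi_k=0$). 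Inequality (\ref{Eq18}) is the vector analogue with $\Phi_j = (y(k)-y(j))^{\rm T}P(y(k)-y(j))\ge 0$, invoking only the symmetry and positive-definiteness of $P$.

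For (\ref{Eq15}) and (\ref{Eq16}) the exact telescoping is replaced by a Young-type bound. Taking Hölder conjugates $p=2m/n$, $q=2m/(2m-n)$ in Lemma~\ref{Lemma 1} (or their swap), I would introduce
\begin{equation*}
H(j):=\tfrac{2m}{2m-n}x(k)\,x^{2m/n-1}(j) - x^{2m/n}(j),\qquad G(j):=\tfrac{2m}{n}x^{2m/n-1}(k)\,x(j) - x^{2m/n}(j),
\end{equation*}
and check that Young's inequality gives $H(j)\le H(k)$ and $G(j)\le G(k)$, with equality at $j=k$ (this is the only place where the particular constants $\tfrac{2m}{2m-n}$, $\tfrac{2m}{n}$ are pinned down). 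Summation-by-parts then produces $\sum K(k,j)\,\nabla H(j) = H(k) - K(k,a-1)H(a-1) - \sum\nabla_j K(k,j)\,H(j-1)$; applying $H(j-1)\le H(k)$, $\nabla_j K\ge 0$ and $\sum\nabla_j K(k,j) = 1-K(k,a-1)$ bounds this from below by $K(k,a-1)[H(k)-H(a-1)]\ge 0$, giving (\ref{Eq15}); the argument for $G$ is identical. Finally, (\ref{Eq17}) follows by iterating (\ref{Eq14}) $m$ times via $x^{2^m}=(x^{2^{m-1}})^2$, since the prefactors multiply to $2^m$ and the exponents sum to $2^{m-1}+\cdots+1 = 2^m-1$.

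The main obstacle I anticipate is simply guessing the correct auxiliary sequences $\Phi_j,H,G$ so that the telescoping and Young identities line up with the stated constants; once the conjugate pair $(p,q)$ is chosen so that equality in Young's inequality holds precisely at $j=k$, the monotonicity of $K(k,\cdot)$ and the summation-by-parts formula extract the signs with no further cleverness. A secondary point is that the Young step tacitly requires that the fractional powers are well-defined (e.g.\ $x\ge 0$ when $2m/n\notin\mathbb{Z}$), which is implicit in the hypotheses.
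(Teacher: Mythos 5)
Your proposal is correct and follows essentially the same route as the paper: rewrite the Caputo difference as a kernel-weighted sum of first differences, sum by parts, exploit the sign and monotonicity of the kernel, prove (\ref{Eq14}) and (\ref{Eq18}) via a nonnegative quadratic auxiliary sequence vanishing at $j=k$, prove (\ref{Eq15})--(\ref{Eq16}) via Young's inequality with the conjugate pair $2m/n$, $2m/(2m-n)$, and obtain (\ref{Eq17}) by iterating (\ref{Eq14}). The only cosmetic deviations are that you use the summation-by-parts variant (\ref{Eq4}) throughout rather than (\ref{Eq3}) (the paper's Remark claims this is awkward for (\ref{Eq15})--(\ref{Eq16}), but your bookkeeping via $\sum_{j}\nabla_j K(k,j)=1-K(k,a-1)$ handles it cleanly and also absorbs the $k=a$ case that the paper treats separately), and that you work with the quadratic form $(y(k)-y(j))^{\rm T}P(y(k)-y(j))$ directly instead of factoring $P=M^{\rm T}M$.
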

\begin{proof}
i) When $k\in\mathbb{N}_{a+1}$, based on Definition \ref{Definition 3} and formula (\ref{Eq6}), it follows
\begin{eqnarray}\label{Eq19}
{\textstyle \begin{array}{l}
{}_a^{\rm C}\nabla _k^\alpha {x^{2m}}\left( k \right) - 2x^m\left(k \right){}_a^{\rm C}\nabla _k^\alpha x^m\left( k \right)\\
= {}_a^{\rm G }\nabla _k^{\alpha  - 1}\nabla {x^{2m}}\left( k \right) - 2x^m\left( k \right){}_a^{\rm G}\nabla _k^{\alpha  - 1}\nabla x^m\left( k \right)\\
= \sum\nolimits_{j = a}^{k} {{\frac{{{{\left( {k - j + 1} \right)}^{\overline { - \alpha} }}}}{{\Gamma \left( { - \alpha +1} \right)}}} \left[ {x^{2m}}\left( j \right) - {x^{2m}}\left( j-1 \right) \right] } \\
\hspace{10pt} - 2x^m\left( k \right) \sum\nolimits_{j = a}^{k} {{\frac{{{{\left( {k - j + 1} \right)}^{\overline { - \alpha} }}}}{{\Gamma \left( { - \alpha +1} \right)}}} \left[ {x}^m\left( j \right) - {x}^m\left( j-1 \right) \right] } \\
=  \sum\nolimits_{j = a}^{k} {{\frac{{{{\left( {k - j + 1} \right)}^{\overline { - \alpha} }}}}{{\Gamma \left( { - \alpha +1} \right)}}}} \{ {x^{2m}}\left( j \right) - {x^{2m}}\left( j-1 \right) - 2x^m\left( k \right) \left[ {x}\left( j \right) - {x}^m\left( j-1 \right) \right] \} \\
=  \sum\nolimits_{j= a}^{k} {{\frac{{{{\left( {k - j + 1} \right)}^{\overline { - \alpha} }}}}{{\Gamma \left( { - \alpha +1} \right)}}}} \{ { \left[ x^m\left( k \right) - x^m\left( j \right) \right]}^2 - { \left[ x^m\left( k \right) - x^m\left( j-1 \right) \right]}^2 \}\\
=  \sum\nolimits_{j= a}^{k} {{\frac{{{{\left( {k - j + 1} \right)}^{\overline { - \alpha} }}}}{{\Gamma \left( { - \alpha +1} \right)}}}} \nabla { \left[ x^m\left( k \right) - x^m\left( j \right) \right]}^2 .
\end{array}}
\end{eqnarray}

Setting $f\left( {j-1} \right) = \frac{{{{\left( {k - j + 1} \right)}^{\overline { - \alpha } }}}}{{\Gamma \left( { - \alpha  + 1} \right)}},~g\left( j \right) = \left[x^m\left( k \right) - x^m\left( j \right)\right]^2$ and the formula of summation by parts (\ref{Eq3}), one can obtain
\begin{equation}\label{Eq20}
{\textstyle \begin{array}{l}
{}_a^{\rm C}\nabla _k^\alpha {x^{2m}}\left( k \right) - 2x^m\left( k \right){}_a^{\rm C}\nabla _k^\alpha x^m\left( k \right) \\
=\sum\nolimits_{j = a}^k {f\left( {j-1} \right)\nabla g\left( j \right)}\\
= f\left( j \right)g\left( j \right)|^k_{a-1}  - \sum\nolimits_{j = a}^k {\nabla f\left( j \right)g\left( j \right)}\\
= \frac{{{0^{\overline { - \alpha } }}}}{{\Gamma \left( { - \alpha  + 1} \right)}}{g}\left( k \right) - \frac{{{{\left( {k - a + 1} \right)}^{\overline { - \alpha } }}}}{{\Gamma \left( { - \alpha  + 1} \right)}}{g}\left( {a - 1} \right) +\sum\nolimits_{j = a}^k { \frac{{{{\left( {k - j+1} \right)}^{\overline { - \alpha-1 } }}}}{{\Gamma \left( { - \alpha} \right)}}{g}\left( j \right)} ,
\end{array}}
\end{equation}
where $\nabla \frac{{{{\left( {k - j } \right)}^{\overline { - \alpha } }}}}{{\Gamma \left( { - \alpha  + 1} \right)}} =  - \frac{{{{\left( {k - j + 1} \right)}^{\overline { - \alpha  - 1} }}}}{{\Gamma \left( { - \alpha } \right)}}$ is adopted here.

Due to the fact that $\frac{{{0^{\overline { - \alpha } }}}}{{\Gamma \left( { - \alpha  + 1} \right)}} = 0$ and $\frac{{{1^{\overline { - \alpha } }}}}{{\Gamma \left( { - \alpha  + 1} \right)}} = 1$ hold for any $\alpha\in(0,1)$, the mentioned formula can be updated as
\begin{equation}\label{Eq21}
{\textstyle \begin{array}{l}
{}_a^{\rm C}\nabla _k^\alpha {x^{2m}}\left( k \right) - 2x^m\left( k \right){}_a^{\rm C}\nabla _k^\alpha x^m\left( k \right) \\
= g\left( k \right) - \frac{{{{\left( {k - a + 1} \right)}^{\overline { - \alpha } }}}}{{\Gamma \left( { - \alpha  + 1} \right)}}{g}\left( {a - 1} \right) +\sum\nolimits_{j = a}^{k-1} { \frac{{{{\left( {k - j+1} \right)}^{\overline { - \alpha-1 } }}}}{{\Gamma \left( { - \alpha} \right)}}{g}\left( j \right)} .
\end{array}}
\end{equation}

Recalling the property of Gamma function, one obtains
\begin{equation}\label{Eq22}
{\textstyle {g}\left( k \right) = 0,k \in \mathbb{N}_{a},}
\end{equation}
\begin{equation}\label{Eq23}
{\textstyle {g}\left( j \right) \ge 0,j =a-1,a,\cdots, k-1,}
\end{equation}
\begin{equation}\label{Eq24}
{\textstyle \frac{{{{\left( {k - a + 1} \right)}^{\overline { - \alpha } }}}}{{\Gamma \left( { - \alpha  + 1} \right)}} \ge 0,k \in \mathbb{N}_{a+1},}
\end{equation}
\begin{equation}\label{Eq25}
{\textstyle \frac{{{{\left( {k - j + 1} \right)}^{\overline { - \alpha  - 1} }}}}{{\Gamma \left( { - \alpha } \right)}}\le 0,j = a,a + 1, \cdots ,k-1.}
\end{equation}
Combining equations (\ref{Eq21})-(\ref{Eq25}), one has
\begin{equation}\label{Eq26}
{\textstyle \begin{array}{l}
{}_a^{\rm C}\nabla _k^\alpha {x^{2m}}\left( k \right) - 2x^m\left( k \right){}_a^{\rm C}\nabla _k^\alpha x^m\left( k \right)\\
= - \frac{{{{\left( {k - a + 1} \right)}^{\overline { - \alpha } }}}}{{\Gamma \left( { - \alpha  + 1} \right)}}{g}\left( {a - 1} \right) +\sum\nolimits_{j = a}^{k-1} { \frac{{{{\left( {k - j+1} \right)}^{\overline { - \alpha-1 } }}}}{{\Gamma \left( { - \alpha} \right)}}{g}\left( j \right)} \\
\le 0,
\end{array}}
\end{equation}
which confirms formula (\ref{Eq14}) with $k\in \mathbb{N}_{a+1}$.

When $k=a$, by applying(\ref{Eq11}), the desirable formula (\ref{Eq14}) becomes
\begin{equation}\label{Eq27}
{\textstyle
{\nabla ^1}{x^{2m}}\left( a \right) \le 2{x^m}\left( a \right){\nabla ^1}{x^m}\left( a \right).}
\end{equation}
Expanding the formula yields
\begin{equation}\label{Eq28}
{\textstyle
\begin{array}{l}
{\nabla ^1}{x^{2m}}\left( a \right)-2{x^m}\left( a \right){\nabla ^1}{x^m}\left( a \right)\\
 = \left[ {{x^{2m}}\left( a \right) - {x^{2m}}\left( {a - 1} \right)} \right]-2{x^m}\left( a \right)\left[ {{x^m}\left( a \right) - {x^m}\left( {a - 1} \right)} \right]\\
 = -{\left[ {{x^m}\left( a \right) - {x^m}\left( {a - 1} \right)} \right]^2}\\
 \le 0,
\end{array}}
\end{equation}
which completes the proof of formula (\ref{Eq14}).

ii) Analogously, it follows from $k\in\mathbb{N}_{a+1}$ that
\begin{equation}\label{Eq29}
{\textstyle
\begin{array}{l}
{}_a^{\rm C}\nabla _k^\alpha {x^{\frac{{2m}}{n}}}\left( k \right) - \frac{{2m}}{{2m - n}}x\left( k \right){}_a^{\rm C}\nabla _k^\alpha {x^{\frac{{2m}}{n} - 1}}\left( k \right)\\
 = {}_a^{ \rm G}\nabla _k^{\alpha  - 1}\nabla {x^{\frac{{2m}}{n}}}\left( k \right) - \frac{{2m}}{{2m - n}}x\left( k \right){}_a^{\rm G}\nabla _k^{\alpha  - 1}\nabla {x^{\frac{{2m}}{n} - 1}}\left( k \right)\\
 = \sum\nolimits_{j = a}^k {\frac{{{{\left( {k - j + 1} \right)}^{\overline { - \alpha } }}}}{{\Gamma \left( { - \alpha  + 1} \right)}}\frac{{2m}}{n}{x^{\frac{{2m}}{n} - 1}}\left( j \right)\nabla x\left( j \right)} \\
\hspace{12pt} - x\left( k \right)\sum\nolimits_{j = a}^k {\frac{{{{\left( {k - j + 1} \right)}^{\overline { - \alpha } }}}}{{\Gamma \left( { - \alpha  + 1} \right)}}\frac{{2m}}{n}{x^{\frac{{2m}}{n} - 2}}\left( j \right)\nabla x\left( j \right)} \\
 = \sum\nolimits_{j = a}^k {f\left( j-1 \right)\nabla g\left( j \right)},
\end{array}}
\end{equation}
where $f\left( j \right) = \frac{{{{\left( {k - j} \right)}^{\overline { - \alpha } }}}}{{\Gamma \left( { - \alpha  + 1} \right)}}$ and $\nabla g\left( j \right) = \frac{{2m}}{n}\left[ {x\left( j \right) - x\left( k \right)} \right]{x^{\frac{{2m}}{n} - 2}}\left( j \right)\nabla x\left( j \right)$. Notably, to facilitate the subsequent proof, two extra terms which are free of $j$ are introduced to construct $g(j)$, namely
\begin{equation}\label{Eq30}
{\textstyle
g\left( {j} \right) = {x^{\frac{{2m}}{n}}}\left( j \right) - \frac{{2m}}{{2m - n}}{x^{\frac{{2m}}{n} - 1}}\left( j \right)x\left( k \right) - {x^{\frac{{2m}}{n}}}\left( k \right) + \frac{{2m}}{{2m - n}}{x^{\frac{{2m}}{n}}}\left( k \right).}
\end{equation}

By applying the formula of summation by parts in (\ref{Eq3}), one has
\begin{equation}\label{Eq31}
{\textstyle \begin{array}{l}
{}_a^{\rm C}\nabla _k^\alpha {x^{\frac{{2m}}{n}}}\left( k \right) - \frac{{2m}}{{2m - n}}x\left( k \right){}_a^{\rm C}\nabla _k^\alpha {x^{\frac{{2m}}{n} - 1}}\left( k \right)\\
= g\left( k \right) - \frac{{{{\left( {k - a + 1} \right)}^{\overline { - \alpha } }}}}{{\Gamma \left( { - \alpha  + 1} \right)}}{g}\left( {a - 1} \right) +\sum\nolimits_{j = a}^{k-1} { \frac{{{{\left( {k - j+1} \right)}^{\overline { - \alpha-1 } }}}}{{\Gamma \left( { - \alpha} \right)}}{g}\left( j \right)} .
\end{array}}
\end{equation}
Interestingly, this formula is identical to (\ref{Eq21}). Besides, the defined $f(j)$ and $g(j)$ meet the conditions in (\ref{Eq22}), (\ref{Eq24}) and (\ref{Eq25}). In other words, if formula (\ref{Eq23}) still holds for $g(j)$ in (\ref{Eq30}), the desirable result
\begin{equation}\label{Eq32}
{\textstyle \begin{array}{l}
{}_a^{\rm C}\nabla _k^\alpha {x^{\frac{{2m}}{n}}}\left( k \right) - \frac{{2m}}{{2m - n}}x\left( k \right){}_a^{\rm C}\nabla _k^\alpha {x^{\frac{{2m}}{n} - 1}}\left( k \right)\\
= - \frac{{{{\left( {k - a + 1} \right)}^{\overline { - \alpha } }}}}{{\Gamma \left( { - \alpha  + 1} \right)}}{g}\left( {a - 1} \right) +\sum\nolimits_{j = a}^{k-1} { \frac{{{{\left( {k - j+1} \right)}^{\overline { - \alpha-1 } }}}}{{\Gamma \left( { - \alpha} \right)}}{g}\left( j \right)} \\
\le 0,
\end{array}}
\end{equation}
can be proven to be true.

Defining $a = \big| {{x^{\frac{{2m}}{n} - 1}}\left( j \right)} \big|$, $b = \left| {x\left( k \right)} \right|$, $p = \frac{{2m}}{{2m - n}}$, $q = \frac{{2m}}{n}$ and using Lemma \ref{Lemma 1}, then one has
\begin{equation}\label{Eq33}
{\textstyle \begin{array}{rl}
{x^{\frac{{2m}}{n} - 1}}\left( j \right)x\left( k \right) \le&\hspace{-6pt} \big| {{x^{\frac{{2m}}{n} - 1}}\left( j \right)} \big|\left| {x\left( k \right)} \right|\\
 \le&\hspace{-6pt} \frac{{2m - n}}{{2m}}{\big| {{x^{\frac{{2m}}{n} - 1}}\left( j \right)} \big|^{\frac{{2m}}{{2m - n}}}} + \frac{n}{{2m}}{\left| {x\left( k \right)} \right|^{\frac{{2m}}{n}}}\\
 =&\hspace{-6pt} \frac{{2m - n}}{{2m}}{x^{\frac{{2m}}{n}}}\left( j \right) + \frac{n}{{2m}}{x^{\frac{{2m}}{n}}}\left( k \right).
\end{array}}
\end{equation}
Furthermore, $g(j)$ can be reduced to
\begin{equation}\label{Eq34}
{\textstyle \begin{array}{rl}
g\left({j} \right) \ge&\hspace{-6pt} {x^{\frac{{2m}}{n}}}\left( j \right) - \frac{{2m}}{{2m - n}}\big[ {\frac{{2m - n}}{{2m}}{x^{\frac{{2m}}{n}}}\left( j \right) + \frac{n}{{2m}}{x^{\frac{{2m}}{n}}}\left( k \right)} \big]\\
 &\hspace{-6pt}- {x^{\frac{{2m}}{n}}}\left( k \right) + \frac{{2m}}{{2m - n}}{x^{\frac{{2m}}{n}}}\left( k \right)\\
 =&\hspace{-6pt} {x^{\frac{{2m}}{n}}}\left( j \right) - {x^{\frac{{2m}}{n}}}\left( j \right) - \frac{n}{{2m - n}}{x^{\frac{{2m}}{n}}}\left( k \right)\\
 &\hspace{-6pt}- {x^{\frac{{2m}}{n}}}\left( k \right) + \frac{{2m}}{{2m - n}}{x^{\frac{{2m}}{n}}}\left( k \right)\\
 =&\hspace{-6pt} 0,
\end{array}}
\end{equation}
from which one can conclude that (\ref{Eq15}) holds for $k\in\mathbb{N}_{a+1}$.

Assuming $k=a$ for (\ref{Eq15}), one has
\begin{equation}\label{Eq35}
{\textstyle {\nabla ^1}{x^{\frac{{2m}}{n}}}\left( a \right) \le \frac{{2m}}{{2m - n}}x\left( a \right){\nabla ^1}{x^{\frac{{2m}}{n} - 1}}\left( a \right).}
\end{equation}
Then,  basic mathematical operation leads to
\begin{equation}\label{Eq36}
{\textstyle \begin{array}{l}
{\nabla ^1}{x^{\frac{{2m}}{n}}}\left( a \right) - \frac{{2m}}{{2m - n}}x\left( a \right){\nabla ^1}{x^{\frac{{2m}}{n} - 1}}\left( a \right)\\
 = \frac{{2m}}{{2m - n}}x\left( a \right){x^{\frac{{2m}}{n} - 1}}\left( {a - 1} \right) - \frac{n}{{2m - n}}{x^{\frac{{2m}}{n}}}\left( a \right) - {x^{\frac{{2m}}{n}}}\left( {a - 1} \right)\\
 \le \frac{{2m}}{{2m - n}}\big[ {\frac{n}{{2m}}{x^{\frac{{2m}}{n}}}\left( a \right) + \frac{{2m - n}}{{2m}}{x^{\frac{{2m}}{n}}}\left( {a - 1} \right)} \big]\\
 \hspace{12pt}- \frac{n}{{2m - n}}{x^{\frac{{2m}}{n}}}\left( a \right) - {x^{\frac{{2m}}{n}}}\left( {a - 1} \right)\\
 \le 0.
\end{array}}
\end{equation}
The proof of (\ref{Eq15}) is thus completed.

iii) Likewise, the following formula with $k\in\mathbb{N}_{a+1}$ can be derived
\begin{equation}\label{Eq37}
{\textstyle \begin{array}{l}
{}_a^{\rm C}\nabla _k^\alpha {x^{\frac{{2m}}{n}}}\left( k \right) - \frac{{2m}}{n}{x^{\frac{{2m}}{n} - 1}}\left( k \right){}_a^{\rm C}\nabla _k^\alpha x\left( k \right)\\
 = {}_a^{\rm G }\nabla _k^{\alpha  - 1}\nabla {x^{\frac{{2m}}{n}}}\left( k \right) - \frac{{2m}}{n}{x^{\frac{{2m}}{n} - 1}}\left( k \right){}_a^{\rm G }\nabla _k^{\alpha  - 1}\nabla x\left( k \right)\\
 = \sum\nolimits_{j = a}^k {\frac{{{{\left( {k - j + 1} \right)}^{\overline { - \alpha } }}}}{{\Gamma \left( { - \alpha  + 1} \right)}}\frac{{2m}}{n}{x^{\frac{{2m}}{n} - 1}}\left( j \right)\nabla x\left( j \right)} \\
 \hspace{12pt}- \frac{{2m}}{n}{x^{\frac{{2m}}{n} - 1}}\left( k \right)\sum\nolimits_{j = a}^k {\frac{{{{\left( {k - j + 1} \right)}^{\overline { - \alpha } }}}}{{\Gamma \left( { - \alpha  + 1} \right)}}\nabla x\left( j \right)} \\
 = \sum\nolimits_{j = a}^k {f\left( j-1 \right)\nabla g\left( j \right)},
\end{array}}
\end{equation}
where $f\left( j \right) = \frac{{{{\left( {k - j} \right)}^{\overline { - \alpha } }}}}{{\Gamma \left( { - \alpha  + 1} \right)}}$ and $\nabla g\left( j \right) = \frac{{2m}}{n}\big[ {{x^{\frac{{2m}}{n} - 1}}\left( j \right) - {x^{\frac{{2m}}{n} - 1}}\left( k \right)} \big]\nabla x\left( j \right)$. Also, to counteract the effects of the cross term, two terms free of $j$ are introduced, to be exact,
\begin{equation}\label{Eq38}
{\textstyle g\left( j \right) = {x^{\frac{{2m}}{n}}}\left( j \right) - \frac{{2m}}{n}{x^{\frac{{2m}}{n} - 1}}\left( k \right)x\left( j \right) - {x^{\frac{{2m}}{n}}}\left( k \right) + \frac{{2m}}{n}{x^{\frac{{2m}}{n}}}\left( k \right).}
\end{equation}

According to (\ref{Eq3}), the formula in (\ref{Eq37}) can be updated as
\begin{equation}\label{Eq39}
{\textstyle\begin{array}{l}
{}_a^{\rm C}\nabla _k^\alpha {x^{\frac{{2m}}{n}}}\left( k \right) - \frac{{2m}}{n}{x^{\frac{{2m}}{n} - 1}}\left( k \right){}_a^{\rm C}\nabla _k^\alpha x\left( k \right)\\
 = g\left( k \right) - \frac{{{{\left( {k - a + 2} \right)}^{\overline { - \alpha } }}}}{{\Gamma \left( { - \alpha  + 1} \right)}}g\left( {a - 1} \right) + \sum\nolimits_{j = a}^k {\frac{{{{\left( {k - j + 2} \right)}^{\overline { - \alpha  - 1} }}}}{{\Gamma \left( { - \alpha } \right)}}g\left( {j - 1} \right)}.
\end{array}}
\end{equation}
Similarly, if the variable $g(j)$ is nonnegative for $j=a-1,a,\cdots,k$, the following expression holds
\begin{equation}\label{Eq40}
{\textstyle \begin{array}{l}
{}_a^{\rm C}\nabla _k^\alpha {x^{\frac{{2m}}{n}}}\left( k \right) - \frac{{2m}}{n}{x^{\frac{{2m}}{n} - 1}}\left( k \right){}_a^{\rm C}\nabla _k^\alpha x\left( k \right)\\
= - \frac{{{{\left( {k - a + 1} \right)}^{\overline { - \alpha } }}}}{{\Gamma \left( { - \alpha  + 1} \right)}}{g}\left( {a - 1} \right) +\sum\nolimits_{j = a}^{k-1} { \frac{{{{\left( {k - j+1} \right)}^{\overline { - \alpha-1 } }}}}{{\Gamma \left( { - \alpha} \right)}}{g}\left( j \right)} \\
\le 0,
\end{array}}
\end{equation}

Setting $a = \big| {{x^{\frac{{2m}}{n} - 1}}\left( k \right)} \big|$, $b = \left| {x\left( j \right)} \right|$, $p = \frac{{2m}}{{2m - n}}$, $q = \frac{{2m}}{n}$ and using Lemma \ref{Lemma 1}, one obtains
\begin{equation}\label{Eq41}
{\textstyle \begin{array}{rl}
{x^{\frac{{2m}}{n} - 1}}\left( k \right)x\left( j \right) \le&\hspace{-6pt} \big| {{x^{\frac{{2m}}{n} - 1}}\left( k \right)} \big|\left| {x\left( j \right)} \right|\\
\le&\hspace{-6pt} \frac{{2m - n}}{{2m}}{\big| {{x^{\frac{{2m}}{n} - 1}}\left( k \right)} \big|^{\frac{{2m}}{{2m - n}}}} + \frac{n}{{2m}}{\left| {x\left( j \right)} \right|^{\frac{{2m}}{n}}}\\
 = &\hspace{-6pt} \frac{{2m - n}}{{2m}}{x^{\frac{{2m}}{n}}}\left( k \right) + \frac{n}{{2m}}{x^{\frac{{2m}}{n}}}\left( j \right),
\end{array}}
\end{equation}
and therefore $g(j)$ can be rewritten as
\begin{equation}\label{Eq42}
{\textstyle \begin{array}{rl}
g\left( j \right) \ge&\hspace{-6pt} {x^{\frac{{2m}}{n}}}\left( j \right) - \frac{{2m}}{n}\big[ {\frac{{2m - n}}{{2m}}{x^{\frac{{2m}}{n}}}\left( k \right) + \frac{n}{{2m}}{x^{\frac{{2m}}{n}}}\left( j \right)} \big]\\
 &\hspace{-6pt}- {x^{\frac{{2m}}{n}}}\left( k \right) + \frac{{2m}}{n}{x^{\frac{{2m}}{n}}}\left( k \right)\\
 =&\hspace{-6pt} {x^{\frac{{2m}}{n}}}\left( j \right) - \frac{{2m - n}}{n}{x^{\frac{{2m}}{n}}}\left( k \right) - {x^{\frac{{2m}}{n}}}\left( j \right)\\
 &\hspace{-6pt}- {x^{\frac{{2m}}{n}}}\left( k \right) + \frac{{2m}}{n}{x^{\frac{{2m}}{n}}}\left( k \right)\\
 =&\hspace{-6pt} 0.
\end{array}}
\end{equation}

When $k=a$, (\ref{Eq16}) reduces to
\begin{equation}\label{Eq43}
{\textstyle {\nabla ^1}{x^{\frac{{2m}}{n}}}\left( a \right) \le \frac{{2m}}{n}{x^{\frac{{2m}}{n} - 1}}\left( a \right){\nabla ^1}x\left( a \right).}
\end{equation}
After basic operation, it follows
\begin{equation}\label{Eq44}
{\textstyle \begin{array}{l}
{\nabla ^1}{x^{\frac{{2m}}{n}}}\left( a \right) - \frac{{2m}}{n}{x^{\frac{{2m}}{n} - 1}}\left( a \right){\nabla ^1}x\left( a \right)\\
 = \frac{{2m}}{n}{x^{\frac{{2m}}{n} - 1}}\left( a \right)x\left( {a - 1} \right) - \frac{{2m - n}}{n}{x^{\frac{{2m}}{n}}}\left( a \right) - {x^{\frac{{2m}}{n}}}\left( {a - 1} \right)\\
 \le \frac{{2m}}{n}\big[ {\frac{{2m - n}}{{2m}}{x^{\frac{{2m}}{n}}}\left( a \right) + \frac{n}{{2m}}{x^{\frac{{2m}}{n}}}\left( {a - 1} \right)} \big]\\
 \hspace{12pt}- \frac{{2m - n}}{n}{x^{\frac{{2m}}{n}}}\left( a \right) - {x^{\frac{{2m}}{n}}}\left( {a - 1} \right)\\
 \le 0
\end{array}}
\end{equation}
This completes the proof of (\ref{Eq16}).

iv) Setting $m=1$, the result in (\ref{Eq14}) can be represented as
\begin{equation}\label{Eq45}
{\textstyle \begin{array}{l}
{}_a^{\rm C}\nabla _k^\alpha {x^2}\left( k \right) - 2x\left( k \right){}_a^{\rm C}\nabla _k^\alpha x\left( k \right) \le 0.
\end{array}}
\end{equation}

Repeating the above-mentioned result in (\ref{Eq45}), the following formula
\begin{equation}\label{Eq46}
{\textstyle \begin{array}{rl}
{}_a^{\rm C}\nabla _k^\alpha {x^{{2^m}}}\left( k \right) \le&\hspace{-6pt} 2{x^{{2^{m - 1}}}}\left( k \right){}_a^{\rm C}\nabla _k^\alpha {x^{{2^{m - 1}}}}\left( k \right)\\
 \le&\hspace{-6pt} {2^2}{x^{{2^{m - 1}} + {2^{m - 2}}}}\left( k \right){}_a^{\rm C}\nabla _k^\alpha {x^{{2^{m - 2}}}}\left( k \right)\\
 \vdots \hspace{3pt}\\
 \le&\hspace{-6pt} {2^m}{x^{{2^m} - 1}}\left( k \right){}_a^{\rm C}\nabla _k^\alpha x\left( k \right),
\end{array} }
\end{equation}
can be derived for any $m\in\mathbb{Z}_+$. That is to say, (\ref{Eq17}) holds.

v) Due to the fact that for any positive definite matrix $P$ there exists a nonsingular matrix $M$ such that $P = {M^{\rm T}}M$, one will rearrange the target variable of (\ref{Eq18}) as
\begin{equation}\label{Eq47}
{\textstyle z\left( k \right) = My\left( k \right), }
\end{equation}
with $z\left( k \right)=\left[ {\begin{array}{*{20}{c}}
{{z_1}\left( k \right)}&{{z_2}\left( k \right)}& \cdots &{{z_\kappa }\left( k \right)}
\end{array}} \right]^{\rm{T}}$.

Then the multivariable case of (\ref{Eq45})
\begin{equation}\label{Eq48}
{\textstyle \begin{array}{l}
\vspace{2pt}
{}_a^{\rm C}\nabla _k^\alpha {y^{\rm{T}}}\left( k \right)Py\left( k \right) - 2{y^{\rm{T}}}\left( k \right)P {}_a^{\rm C}\nabla _k^\alpha y\left( k \right)\\
= {}_a^{\rm C}\nabla_k^\alpha z^{\rm{T}}\left( k \right) z\left( k \right)- 2{z}^{\rm{T}}\left( k \right) {}_a^{\rm C}\nabla _k^\alpha {z}\left( k \right) \\
= \sum\nolimits_{i = 1}^\kappa  {\left[ {{}_a^{\rm{C}}\nabla _k^\alpha z_i^2\left( k \right) - 2{z_i}\left( k \right){}_a^{\rm{C}}\nabla _k^\alpha {z_i}\left( k \right)} \right]}  \\
 \le 0
\end{array} }
\end{equation}
follows instantly.
\end{proof}

Configuring some parameters, the following corollary can be obtained from Theorem \ref{Theorem 1} immediately.

\begin{corollary}\label{Corollary 1} The following inequalities hold
\begin{equation}\label{Eq49}
{\textstyle {}_a^{\rm C}\nabla _k^\alpha {x^{2m}}\left( k \right) \le \frac{{2m}}{{2m - 1}}x\left( k \right){}_a^{\rm C}\nabla _k^\alpha {x^{2m - 1}}\left( k \right)},
\end{equation}
\begin{equation}\label{Eq50}
{\textstyle {}_a^{\rm C}\nabla _k^\alpha {x^{2m}}\left( k \right) \le 2m{x^{2m - 1}}\left( k \right){}_a^{\rm C}\nabla _k^\alpha x\left( k \right)},
\end{equation}
\begin{equation}\label{Eq51}
{\textstyle {}_a^{\rm C}\nabla _k^\alpha {x^{2}}\left( k \right) \le 2x\left( k \right){}_a^{\rm C}\nabla _k^\alpha x\left( k \right)},
\end{equation}
for any $0<\alpha<1$, $m\in\mathbb{Z}_+$, $x(k)\in\mathbb{R}$, $ k\in {\mathbb N}_{a} $ and $a\in\mathbb{R}$.
\end{corollary}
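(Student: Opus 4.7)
The plan is to obtain each of the three inequalities (\ref{Eq49})--(\ref{Eq51}) by specializing the free parameters $m$ and $n$ in the inequalities of Theorem \ref{Theorem 1}; no new analytical machinery is required, since the heavy lifting (the summation-by-parts identity and the Young-inequality bound on $g(j)$) has already been done in that theorem. I will treat each of the three inequalities separately and just verify that the chosen parameter values fall inside the admissible region $\alpha\in(0,1)$, $m,n\in\mathbb{Z}_+$, $2m\ge n$.

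First, I would derive (\ref{Eq49}) from (\ref{Eq15}) by taking $n=1$. This is admissible because $m\in\mathbb{Z}_+$ forces $2m\ge 1=n$; the exponent $\tfrac{2m}{n}$ collapses to $2m$, the inner exponent $\tfrac{2m}{n}-1$ collapses to $2m-1$, and the coefficient $\tfrac{2m}{2m-n}$ becomes $\tfrac{2m}{2m-1}$, reproducing (\ref{Eq49}) verbatim. Next, (\ref{Eq50}) follows from (\ref{Eq16}) under the same substitution $n=1$: the right-hand side becomes $\tfrac{2m}{1}x^{2m-1}(k)\,{}_a^{\rm C}\nabla_k^\alpha x(k)=2m\,x^{2m-1}(k)\,{}_a^{\rm C}\nabla_k^\alpha x(k)$, which is exactly (\ref{Eq50}). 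Finally, for (\ref{Eq51}) I would take $m=1$ in (\ref{Eq14}); the exponent $2m$ reduces to $2$ and both factors $x^m$ collapse to $x$, producing $2x(k)\,{}_a^{\rm C}\nabla_k^\alpha x(k)$ on the right-hand side, as required.

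There is no genuine obstacle here; the only sanity check is that the admissibility conditions of Theorem \ref{Theorem 1} ($\alpha\in(0,1)$, $m,n\in\mathbb{Z}_+$, $2m\ge n$) are preserved by each substitution, and they are in all three cases. Thus Corollary \ref{Corollary 1} is a corollary in the literal sense: each line is just Theorem \ref{Theorem 1} evaluated at a particular choice of parameters.
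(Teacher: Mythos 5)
Your proposal is correct and matches the paper exactly: the paper derives Corollary \ref{Corollary 1} by "configuring some parameters" in Theorem \ref{Theorem 1}, which is precisely your substitution $n=1$ in (\ref{Eq15}) and (\ref{Eq16}) for (\ref{Eq49}) and (\ref{Eq50}), and $m=1$ in (\ref{Eq14}) for (\ref{Eq51}). Your explicit check that the admissibility conditions $m,n\in\mathbb{Z}_+$ and $2m\ge n$ are preserved is the only content the paper leaves implicit.
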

\begin{remark}\label{Remark 2}
(\ref{Eq14}) and (\ref{Eq49}) can be regarded as the discrete extension of that in \cite{Fernandez:2017CNSNS,Fernandez:2018CNSNS}. However, the presented results in \cite{Fernandez:2017CNSNS} are questionable and the conditions in \cite{Fernandez:2018CNSNS} are harsh and impractical. To overcome this drawback, two alternative formulas (\ref{Eq16}) and (\ref{Eq50}) are proposed. Likewise, (\ref{Eq16})-(\ref{Eq18}) are the generalization of the related results in \cite{Dai:2017NODY,Ding:2015CTA,Duarte:2015CNSNS}, respectively. The formula in (\ref{Eq51}) is the discrete case of (5) in \cite{Alikhanov:2012AMC} and (6) in \cite{Aguila:2014CNSNS}. Actually, the formula (\ref{Eq14}) can also be proven by means of (\ref{Eq4}) instead of (\ref{Eq3}), while it is difficult to derive the results in (\ref{Eq15}) and (\ref{Eq16}) like that.
\end{remark}

Note that the similar results on the definition of Riemann--Liouville can also be established.
\begin{theorem}\label{Theorem 2} The following inequalities hold
\begin{equation}\label{Eq52}
{\textstyle {}_a^{\rm R}\nabla _k^\alpha {x^{2m}}\left( k \right) \le 2x^m\left( k \right){}_a^{\rm R}\nabla _k^\alpha x^m\left( k \right)},
\end{equation}
\begin{equation}\label{Eq53}
{\textstyle {}_a^{\rm R}\nabla _k^\alpha {x^{\frac{{2m}}{n}}}\left( k \right) \le \frac{{2m}}{{2m - n}}x\left( k \right){}_a^{\rm R}\nabla _k^\alpha {x^{\frac{{2m}}{n} - 1}}\left( k \right)},
\end{equation}
\begin{equation}\label{Eq54}
{\textstyle {}_a^{\rm R}\nabla _k^\alpha {x^{\frac{{2m}}{n}}}\left( k \right) \le \frac{{2m}}{n}{x^{\frac{{2m}}{n} - 1}}\left( k \right){}_a^{\rm R}\nabla _k^\alpha x\left( k \right)},
\end{equation}
\begin{equation}\label{Eq55}
{\textstyle {}_a^{\rm R}\nabla _k^\alpha {x^{{2^m}}}\left( k \right) \le {2^m}{x^{{2^m} - 1}}\left( k\right){}_a^{\rm R}\nabla _k^\alpha x\left( k \right)},
\end{equation}
\begin{equation}\label{Eq56}
{\textstyle {}_a^{\rm R}\nabla _k^\alpha {y^{\rm{T}}}\left( k \right)Py\left( k \right) \le 2{y^{\rm{T}}}\left(k \right)P {}_a^{\rm R}\nabla _k^\alpha y\left( k \right)},
\end{equation}
for any $0<\alpha<1$, $m\in\mathbb{Z}_+$, $n\in\mathbb{Z}_+$, $2m\ge n$, $x(k)\in\mathbb{R}$, $y(k)\in\mathbb{R}^\kappa$, $\kappa\in\mathbb{Z}_+$, $ k\in {\mathbb N}_{a} $, $a\in\mathbb{R}$ and positive definite matrix $P\in\mathbb{R}^{\kappa\times \kappa}$.
\end{theorem}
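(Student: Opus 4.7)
The plan is to transplant the proof of Theorem \ref{Theorem 1} almost verbatim, exploiting the fact that in the Riemann--Liouville case the weighted-sum representation is available without invoking summation by parts. First I would establish that, for $k \in \mathbb{N}_{a+1}$,
\[
{}_a^{\rm R}\nabla_k^\alpha f(k) = \sum\nolimits_{j=a}^k \frac{(k-j+1)^{\overline{-\alpha-1}}}{\Gamma(-\alpha)} f(j),
\]
by applying the outer $\nabla$ in Definition \ref{Definition 3} to the explicit G-sum (\ref{Eq6}) with exponent $\alpha-1$ and using the telescoping identity $(k-j+1)^{\overline{-\alpha}} - (k-j)^{\overline{-\alpha}} = -\alpha (k-j+1)^{\overline{-\alpha-1}}$. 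This formula plays the role that the post-summation-by-parts expressions (\ref{Eq20}), (\ref{Eq31}), (\ref{Eq39}) play in the Caputo proof, so invoking (\ref{Eq3}) is no longer needed.

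With this in hand I would treat (\ref{Eq52})--(\ref{Eq54}) uniformly. For (\ref{Eq52}) I take $g(j) = [x^m(k) - x^m(j)]^2$ and use the identity $x^{2m}(j) - 2x^m(k)x^m(j) = g(j) - x^{2m}(k)$ to write
\[
{}_a^{\rm R}\nabla_k^\alpha x^{2m}(k) - 2x^m(k){}_a^{\rm R}\nabla_k^\alpha x^m(k) = \sum\nolimits_{j=a}^{k-1} \frac{(k-j+1)^{\overline{-\alpha-1}}}{\Gamma(-\alpha)} g(j) - x^{2m}(k)\,\frac{(k-a+1)^{\overline{-\alpha}}}{\Gamma(-\alpha+1)},
\]
where I used $g(k)=0$ together with the auxiliary identity ${}_a^{\rm R}\nabla_k^\alpha 1 = \frac{(k-a+1)^{\overline{-\alpha}}}{\Gamma(-\alpha+1)}$ (which itself follows by summing the weights). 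Both terms are non-positive by the sign properties (\ref{Eq23})--(\ref{Eq25}). For (\ref{Eq53}) and (\ref{Eq54}) I would reuse the $g$ defined in (\ref{Eq30}) and (\ref{Eq38}), verifying $g(j) \ge 0$ via Lemma \ref{Lemma 1} exactly as in (\ref{Eq33})--(\ref{Eq34}) and (\ref{Eq41})--(\ref{Eq42}); the constant offset extracted together with $g(j)$ evaluates to $-\tfrac{n}{2m-n}x^{2m/n}(k)$ in (\ref{Eq53}) and $-\tfrac{2m-n}{n}x^{2m/n}(k)$ in (\ref{Eq54}), both non-positive. The boundary case $k=a$ is now immediate from (\ref{Eq10}) since both sides vanish, so no analog of (\ref{Eq27})--(\ref{Eq28}) is required. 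Finally, (\ref{Eq55}) follows by iterating (\ref{Eq52}) exactly as (\ref{Eq46}) iterates (\ref{Eq45}), and (\ref{Eq56}) via the factorization $P = M^{\rm T}M$ as in (\ref{Eq47})--(\ref{Eq48}).

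The main obstacle is the accounting around the new constant-offset term $C\cdot{}_a^{\rm R}\nabla_k^\alpha 1$. In the Caputo setting the analogous quantity was the boundary piece $-\phi(k) g(a-1)$ produced by summation by parts, whose sign was governed entirely by $g(a-1)\ge 0$. In the Riemann--Liouville setting the offset is instead driven by $C$, and one must verify term by term that the Young-inequality shift chosen to force $g(k)=0$ leaves $C$ non-positive. The verification is mechanical, but it has to be redone for each of (\ref{Eq52})--(\ref{Eq54}) since the algebra that made it automatic in the Caputo argument no longer applies.
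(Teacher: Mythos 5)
Your proof is correct, but it takes a genuinely different route from the paper's. The paper proves Theorem \ref{Theorem 2} by invoking the Riemann--Liouville/Caputo relation (\ref{Eq12}) with $n=1$, substituting the intermediate expressions (\ref{Eq26}), (\ref{Eq32}) and (\ref{Eq40}) already obtained via summation by parts in the proof of Theorem \ref{Theorem 1}, and then checking that the initial-value correction terms involving $x(a-1)$ merge with the boundary term $-\frac{(k-a+1)^{\overline{-\alpha}}}{\Gamma(-\alpha+1)}g(a-1)$ into a single non-positive multiple of a power of $x(k)$ --- precisely the offsets $-x^{2m}(k)$, $-\tfrac{n}{2m-n}x^{2m/n}(k)$ and $-\tfrac{2m-n}{n}x^{2m/n}(k)$ that you obtain. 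You instead derive the explicit convolution representation ${}_a^{\rm R}\nabla_k^\alpha f(k)=\sum\nolimits_{j=a}^k\frac{(k-j+1)^{\overline{-\alpha-1}}}{\Gamma(-\alpha)}f(j)$ for $k\in\mathbb{N}_{a+1}$ (your telescoping identity and the value of the $j=k$ weight both check out), which in effect identifies the nabla Riemann--Liouville difference with the Gr\"{u}nwald--Letnikov form, and then run the Young/completing-the-square argument directly on the weights --- the same technique the paper uses for Theorem \ref{Theorem 3}. Both routes land on identical final non-positive expressions; yours is self-contained (no summation by parts, no $g(a-1)$ bookkeeping, and the constant offsets $C\cdot{}_a^{\rm R}\nabla_k^\alpha 1$ are verified non-positive exactly as you describe), and it makes Theorem \ref{Theorem 2} essentially a corollary of the Theorem \ref{Theorem 3} machinery, whereas the paper's version highlights the structural link between the two definitions and reuses Theorem \ref{Theorem 1}. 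The endgame coincides in both: $k=a$ is dispatched by (\ref{Eq10}), (\ref{Eq55}) follows by iterating (\ref{Eq52}), and (\ref{Eq56}) by the factorization $P=M^{\rm T}M$.
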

\begin{proof} With the help of (\ref{Eq10}), the correctness of (\ref{Eq52})-(\ref{Eq56}) with $k=a$ can be checked directly. As a consequence, the case of $k\in\mathbb{N}_{a+1}$ is considered in the following discussion.

i) With the assistance of formulas (\ref{Eq6}) and (\ref{Eq12}), one has
\begin{equation}\label{Eq57}
{\textstyle \begin{array}{l}
{}_a^{\rm R}\nabla _k^\alpha {x^{2m}}\left( k \right) - 2x^m\left(k \right){}_a^{\rm R}\nabla _k^\alpha x^m\left( k \right)\\
={}_a^{\rm C}\nabla _k^\alpha {x^{2m}}\left( k \right)+\frac{{{{\left( {k - a + 1} \right)}^{\overline { - \alpha } }}}}{{\Gamma \left( { - \alpha  + 1} \right)}}{x^{2m}}\left( {a - 1} \right)\\
\hspace{12pt} - 2x^m\left(k \right){}_a^{\rm C}\nabla _k^\alpha x^m\left( k \right)-2{x^m}\left( k \right)\frac{{{{\left( {k - a + 1} \right)}^{\overline { - \alpha } }}}}{{\Gamma \left( { - \alpha  + 1} \right)}}{x^m}\left( {a - 1} \right) .
\end{array}}
\end{equation}
By applying the formula in (\ref{Eq26}), the previous formula becomes
\begin{equation}\label{Eq58}
{\textstyle \begin{array}{l}
{}_a^{\rm R}\nabla _k^\alpha {x^{2m}}\left( k \right) - 2x^m\left(k \right){}_a^{\rm R}\nabla _k^\alpha x^m\left( k \right)\\
= - \frac{{{{\left( {k - a + 1} \right)}^{\overline { - \alpha } }}}}{{\Gamma \left( { - \alpha  + 1} \right)}}[{x}\left( {k} \right)-{x}\left( {a - 1} \right)]^2 +\sum\nolimits_{j = a}^{k-1} { \frac{{{{\left( {k - j+1} \right)}^{\overline { - \alpha-1 } }}}}{{\Gamma \left( { - \alpha} \right)}}{g}\left( j \right)} \\
\hspace{12pt}+\frac{{{{\left( {k - a + 1} \right)}^{\overline { - \alpha } }}}}{{\Gamma \left( { - \alpha  + 1} \right)}}{x^{2m}}\left( {a - 1} \right)-2{x^m}\left( k \right)\frac{{{{\left( {k - a + 1} \right)}^{\overline { - \alpha } }}}}{{\Gamma \left( { - \alpha  + 1} \right)}}{x^m}\left( {a - 1} \right)\\
= - \frac{{{{\left( {k - a + 1} \right)}^{\overline { - \alpha } }}}}{{\Gamma \left( { - \alpha  + 1} \right)}}{x}^{2m}\left( k \right)+\sum\nolimits_{j = a}^{k-1} { \frac{{{{\left( {k - j+1} \right)}^{\overline { - \alpha-1 } }}}}{{\Gamma \left( { - \alpha} \right)}}{g}\left( j \right)} \\
\le 0.
\end{array}}
\end{equation}
This completes the proof of (\ref{Eq52}).

ii) In a similar way, it follows
\begin{equation}\label{Eq59}
{\textstyle
\begin{array}{l}
{}_a^{\rm R}\nabla _k^\alpha {x^{\frac{{2m}}{n}}}\left( k \right) - \frac{{2m}}{{2m - n}}x\left( k \right){}_a^{\rm R}\nabla _k^\alpha {x^{\frac{{2m}}{n} - 1}}\left( k \right)\\
={}_a^{\rm C}\nabla _k^\alpha {x^{\frac{{2m}}{n}}}\left( k \right) +\frac{{{{\left( {k - a + 1} \right)}^{\overline { - \alpha } }}}}{{\Gamma \left( { - \alpha  + 1} \right)}}{x^{\frac{{2m}}{n}}}\left( {a - 1} \right) \\
\hspace{12pt}- \frac{{2m}}{{2m - n}}x\left( k \right){}_a^{\rm C}\nabla _k^\alpha {x^{\frac{{2m}}{n} - 1}}\left( k \right)- \frac{{2m}}{{2m - n}}{x}\left( k \right)\frac{{{{\left( {k - a + 1} \right)}^{\overline { - \alpha } }}}}{{\Gamma \left( { - \alpha  + 1} \right)}}{x^{\frac{{2m}}{n} - 1}}\left( {a - 1} \right)\\
 = - \frac{{{{\left( {k - a + 1} \right)}^{\overline { - \alpha } }}}}{{\Gamma \left( { - \alpha  + 1} \right)}}{g}\left( {a - 1} \right) +\sum\nolimits_{j = a}^{k-1} { \frac{{{{\left( {k - j+1} \right)}^{\overline { - \alpha-1 } }}}}{{\Gamma \left( { - \alpha} \right)}}{g}\left( j \right)} \\
\hspace{12pt}+\frac{{{{\left( {k - a + 1} \right)}^{\overline { - \alpha } }}}}{{\Gamma \left( { - \alpha  + 1} \right)}}{x^{\frac{{2m}}{n}}}\left( {a - 1} \right)- \frac{{2m}}{{2m - n}}{x}\left( k \right)\frac{{{{\left( {k - a + 1} \right)}^{\overline { - \alpha } }}}}{{\Gamma \left( { - \alpha  + 1} \right)}}{x^{\frac{{2m}}{n} - 1}}\left( {a - 1} \right).
\end{array}}
\end{equation}
Recalling $g\left( k\right)$ in (\ref{Eq30}), formula (\ref{Eq59}) can be rewritten as
\begin{equation}\label{Eq60}
{\textstyle
\begin{array}{l}
{}_a^{\rm R}\nabla _k^\alpha {x^{\frac{{2m}}{n}}}\left( k \right) - \frac{{2m}}{{2m - n}}x\left( k \right){}_a^{\rm R}\nabla _k^\alpha {x^{\frac{{2m}}{n} - 1}}\left( k \right)\\
 = - \frac{n}{2m-n}\frac{{{{\left( {k - a + 1} \right)}^{\overline { - \alpha } }}}}{{\Gamma \left( { - \alpha  + 1} \right)}}x^{\frac{2m}{n}}\left( {k} \right) +\sum\nolimits_{j = a}^{k-1} { \frac{{{{\left( {k - j+1} \right)}^{\overline { - \alpha-1 } }}}}{{\Gamma \left( { - \alpha} \right)}}{g}\left( j \right)} \\
\le0.
\end{array}}
\end{equation}
This completes the proof of (\ref{Eq53}).

iii) Similarly, one has
\begin{equation}\label{Eq61}
{\textstyle
\begin{array}{l}
{}_a^{\rm R}\nabla _k^\alpha {x^{\frac{{2m}}{n}}}\left( k \right) - \frac{{2m}}{{ n}}x^{\frac{{2m}}{n} - 1}\left( k \right){}_a^{\rm R}\nabla _k^\alpha {x}\left( k \right)\\
={}_a^{\rm C}\nabla _k^\alpha {x^{\frac{{2m}}{n}}}\left( k \right) +\frac{{{{\left( {k - a + 1} \right)}^{\overline { - \alpha } }}}}{{\Gamma \left( { - \alpha  + 1} \right)}}{x^{\frac{{2m}}{n}}}\left( {a - 1} \right) \\
\hspace{12pt}- \frac{{2m}}{{n}}x^{\frac{{2m}}{n} - 1}\left( k \right){}_a^{\rm C}\nabla _k^\alpha {x}\left( k \right)- \frac{{2m}}{{n}}{x}^{\frac{{2m}}{n} - 1}\left( k \right)\frac{{{{\left( {k - a + 1} \right)}^{\overline { - \alpha } }}}}{{\Gamma \left( { - \alpha  + 1} \right)}}{x}\left( {a - 1} \right)\\
 = - \frac{{{{\left( {k - a + 1} \right)}^{\overline { - \alpha } }}}}{{\Gamma \left( { - \alpha  + 1} \right)}}{g}\left( {a - 1} \right) +\sum\nolimits_{j = a}^{k-1} { \frac{{{{\left( {k - j+1} \right)}^{\overline { - \alpha-1 } }}}}{{\Gamma \left( { - \alpha} \right)}}{g}\left( j \right)} \\
\hspace{12pt}+\frac{{{{\left( {k - a + 1} \right)}^{\overline { - \alpha } }}}}{{\Gamma \left( { - \alpha  + 1} \right)}}{x^{\frac{{2m}}{n}}}\left( {a - 1} \right)- \frac{{2m}}{{n}}{x}^{\frac{{2m}}{n} - 1}\left( k \right)\frac{{{{\left( {k - a + 1} \right)}^{\overline { - \alpha } }}}}{{\Gamma \left( { - \alpha  + 1} \right)}}{x}\left( {a - 1} \right)\\
 = - \frac{2m-n}{n}\frac{{{{\left( {k - a + 1} \right)}^{\overline { - \alpha } }}}}{{\Gamma \left( { - \alpha  + 1} \right)}}x^{\frac{2m}{n}}\left( {k} \right) +\sum\nolimits_{j = a}^{k-1} { \frac{{{{\left( {k - j+1} \right)}^{\overline { - \alpha-1 } }}}}{{\Gamma \left( { - \alpha} \right)}}{g}\left( j \right)} \\
\le0,
\end{array}}
\end{equation}
which completes the proof of (\ref{Eq54}).

iv) Setting $m=n=1$, then formulas (\ref{Eq52})-(\ref{Eq54}) arrive at
\begin{equation}\label{Eq62}
{\textstyle {}_a^{\rm R}\nabla _k^\alpha {x^{2}}\left( k \right) \le 2x\left( k \right){}_a^{\rm R}\nabla _k^\alpha x\left( k \right)}.
\end{equation}
On this basis, adopting the proof method in Theorem \ref{Theorem 1}, the remainder proof has been completed successfully. To avoid redundancy, it is omitted here.
\end{proof}
\begin{remark}\label{Remark 2}
The proof for Theorem \ref{Theorem 2} is a little more complicated than that of Theorem \ref{Theorem 1}, since the relationship between Riemann--Liouville fractional difference and Caputo fractional difference is adopted. Actually, the authors in \cite{Dai:2017NODY} tried to discuss the similar problem in the continuous case, while the easy-to-use result like \cite{Liu:2016NODY} was not given. Meanwhile, when $m=1$, formula (\ref{Eq52}) reduces to Lemma 2.10 of \cite{Wu:2017AMC}.
\end{remark}

Inspired by the discussion in \cite{Dai:2017NODY}, more practical results on the definition of Gr\"{u}nwald-Letnikov can be established.
\begin{theorem}\label{Theorem 3}
The following inequalities hold
\begin{equation}\label{Eq63}
{\textstyle {}_a^{\rm G}\nabla _k^\alpha {x^{2m}}\left( k \right) \le 2x^m\left( k \right){}_a^{\rm G}\nabla _k^\alpha x^m\left( k \right)},
\end{equation}
\begin{equation}\label{Eq64}
{\textstyle {}_a^{\rm G}\nabla _k^\alpha {x^{\frac{{2m}}{n}}}\left( k \right) \le \frac{{2m}}{{2m - n}}x\left( k \right){}_a^{\rm G}\nabla _k^\alpha {x^{\frac{{2m}}{n} - 1}}\left( k \right)},
\end{equation}
\begin{equation}\label{Eq65}
{\textstyle {}_a^{\rm G}\nabla _k^\alpha {x^{\frac{{2m}}{n}}}\left( k \right) \le \frac{{2m}}{n}{x^{\frac{{2m}}{n} - 1}}\left( k \right){}_a^{\rm G}\nabla _k^\alpha x\left( k \right)},
\end{equation}
\begin{equation}\label{Eq66}
{\textstyle {}_a^{\rm G}\nabla _k^\alpha {x^{{2^m}}}\left( k \right) \le {2^m}{x^{{2^m} - 1}}\left( k\right){}_a^{\rm G}\nabla _k^\alpha x\left( k \right)},
\end{equation}
\begin{equation}\label{Eq67}
{\textstyle {}_a^{\rm G}\nabla _k^\alpha {y^{\rm{T}}}\left( k \right)Py\left( k \right) \le 2{y^{\rm{T}}}\left(k \right)P {}_a^{\rm G}\nabla _k^\alpha y\left( k \right)},
\end{equation}
for any $0<\alpha<1$, $m\in\mathbb{Z}_+$, $n\in\mathbb{Z}_+$, $2m\ge n$, $x(k)\in\mathbb{R}$, $y(k)\in\mathbb{R}^\kappa$, $\kappa\in\mathbb{Z}_+$, $ k\in {\mathbb N}_{a} $, $a\in\mathbb{R}$ and positive definite matrix $P\in\mathbb{R}^{\kappa\times \kappa}$.
\end{theorem}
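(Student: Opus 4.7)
The plan is to mirror the argument developed in Theorem \ref{Theorem 2}, with the Riemann--Liouville/Caputo bridge (\ref{Eq12}) replaced by an analogous bridge between Gr\"{u}nwald--Letnikov and Caputo. The first step is to establish, for $0<\alpha<1$ and $k\in\mathbb{N}_a$, the identity
\begin{equation*}
{}_a^{\rm G}\nabla_k^\alpha f(k) \;=\; {}_a^{\rm C}\nabla_k^\alpha f(k) \;+\; \tfrac{(k-a+1)^{\overline{-\alpha}}}{\Gamma(-\alpha+1)} f(a-1).
\end{equation*}
I would derive this by expanding ${}_a^{\rm G}\nabla_k^\alpha f(k)$ through (\ref{Eq6}) and ${}_a^{\rm C}\nabla_k^\alpha f(k)={}_a^{\rm G}\nabla_k^{\alpha-1}\nabla f(k)$ by summation-by-parts in $j$, using the discrete identity $\nabla\tfrac{(k-j)^{\overline{-\alpha}}}{\Gamma(-\alpha+1)} = -\tfrac{(k-j+1)^{\overline{-\alpha-1}}}{\Gamma(-\alpha)}$ already invoked in (\ref{Eq20}), and then comparing term by term. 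Since this identity has precisely the same right-hand side as (\ref{Eq12}), each of (\ref{Eq63})--(\ref{Eq67}) is structurally identical to the corresponding (\ref{Eq52})--(\ref{Eq56}), so the machinery of Theorem \ref{Theorem 2} can be replayed almost verbatim.

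Next I would split into the cases $k=a$ and $k\in\mathbb{N}_{a+1}$. For $k=a$, the initial-value formula (\ref{Eq9}) gives ${}_a^{\rm G}\nabla_a^\alpha f(a)=f(a)$, so each inequality reduces to a pointwise algebraic statement: (\ref{Eq63}) becomes $x^{2m}(a)\le 2x^{2m}(a)$, which is automatic, while (\ref{Eq64})--(\ref{Eq65}) become instances of Young's inequality (Lemma \ref{Lemma 1}) that are handled exactly as in (\ref{Eq36}) and (\ref{Eq44}). For $k\in\mathbb{N}_{a+1}$, I would substitute the G--L/Caputo identity into both sides of the target inequality, producing a decomposition of the form
\begin{equation*}
\bigl[{}_a^{\rm C}\nabla_k^\alpha V(k)-(\text{Caputo-form RHS})\bigr] \;+\; \tfrac{(k-a+1)^{\overline{-\alpha}}}{\Gamma(-\alpha+1)}\,(\text{boundary residue}),
\end{equation*}
strictly parallel to (\ref{Eq57})--(\ref{Eq61}). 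The Caputo bracket is bounded by the intermediate estimates (\ref{Eq26}), (\ref{Eq32}), (\ref{Eq40}) from the proof of Theorem \ref{Theorem 1}, and the boundary residue telescopes with the pre-existing $-\tfrac{(k-a+1)^{\overline{-\alpha}}}{\Gamma(-\alpha+1)}g(a-1)$ term to leave a manifestly non-positive remainder (for example, $-\tfrac{(k-a+1)^{\overline{-\alpha}}}{\Gamma(-\alpha+1)}x^{2m}(k)$ in the case of (\ref{Eq63})).

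Inequality (\ref{Eq66}) is then obtained by iterating (\ref{Eq63}) with $m=1$, exactly as in step iv) of Theorem \ref{Theorem 1}, and (\ref{Eq67}) follows from (\ref{Eq63}) with $m=1$ under the change of variables $z(k)=My(k)$, $P=M^{\rm T}M$, as in step v). The main obstacle is simply the cancellation bookkeeping: for each of (\ref{Eq63})--(\ref{Eq65}) one must verify that the two extra $\tfrac{(k-a+1)^{\overline{-\alpha}}}{\Gamma(-\alpha+1)}f(a-1)$ contributions introduced on both sides by the bridge combine with the Caputo boundary $-\tfrac{(k-a+1)^{\overline{-\alpha}}}{\Gamma(-\alpha+1)}g(a-1)$ to produce a non-positive remainder, which is precisely the algebraic reduction carried out in (\ref{Eq58}), (\ref{Eq60}), and (\ref{Eq61}) of Theorem \ref{Theorem 2}.
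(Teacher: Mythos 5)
Your proof is correct, but it takes a genuinely different route from the paper's. You reduce Theorem \ref{Theorem 3} to Theorem \ref{Theorem 2} by first proving a Gr\"{u}nwald--Letnikov/Caputo bridge whose right-hand side coincides with the $n=1$ case of (\ref{Eq12}); this is legitimate, since for $0<\alpha<1$ and $k\in\mathbb{N}_{a+1}$ one indeed has ${}_a^{\rm G}\nabla _k^\alpha f(k)=\nabla\,{}_a^{\rm G}\nabla _k^{\alpha-1}f(k)={}_a^{\rm R}\nabla _k^\alpha f(k)$ under the paper's definitions, so (\ref{Eq63})--(\ref{Eq65}) for $k\in\mathbb{N}_{a+1}$ become literally (\ref{Eq52})--(\ref{Eq54}), and your summation-by-parts derivation of the bridge via $\nabla\frac{(k-j)^{\overline{-\alpha}}}{\Gamma(-\alpha+1)}=-\frac{(k-j+1)^{\overline{-\alpha-1}}}{\Gamma(-\alpha)}$ checks out. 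The paper instead argues directly from the sum representation (\ref{Eq6}): it isolates the $j=0$ term, applies Young's inequality to each cross term $x^m(k)x^m(k-j)$ (resp.\ $x(k)x^{\frac{2m}{n}-1}(k-j)$, $x^{\frac{2m}{n}-1}(k)x(k-j)$), uses the sign $\frac{(j+1)^{\overline{-\alpha-1}}}{\Gamma(-\alpha)}<0$ for $j\ge 1$, and telescopes the coefficient sum to $\frac{(k-a+1)^{\overline{-\alpha}}}{\Gamma(-\alpha+1)}\ge 0$; see (\ref{Eq68})--(\ref{Eq77}). Your reduction buys economy and makes explicit the structural identity between Theorems \ref{Theorem 2} and \ref{Theorem 3}, at the price of routing a G--L statement through the Caputo and Riemann--Liouville machinery; the paper's direct estimate is self-contained, needs no summation by parts, and is the version that survives the generalizations of Section 3.2 (fixed memory step, time-varying order), where a clean bridge to the Riemann--Liouville form is unavailable. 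One cosmetic slip in your boundary case: at $k=a$, by (\ref{Eq9}) both sides of (\ref{Eq64}) and (\ref{Eq65}) reduce to multiples of $x^{\frac{2m}{n}}(a)$ with coefficients $\frac{2m}{2m-n}\ge 1$ and $\frac{2m}{n}\ge 1$, so they are immediate as in (\ref{Eq75}) and (\ref{Eq77}); they are not Young-inequality instances like the Caputo boundary cases (\ref{Eq36}) and (\ref{Eq44}), though nothing breaks.
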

\begin{proof}
i) Setting $k\in\mathbb{N}_{a+1}$, Definition \ref{Definition 2} and formula (\ref{Eq6}) result in
\begin{equation}\label{Eq68}
{\textstyle \begin{array}{l}
{}_a^{\rm{G}}\nabla _k^\alpha {x^{2m}}\left( k \right) - 2{x^m}\left( k \right){}_a^{\rm{G}}\nabla _k^\alpha {x^m}\left( k \right)\\
 = \sum\nolimits_{j = 0}^{k - a} {\frac{{{{\left( {j + 1} \right)}^{\overline { - \alpha  - 1} }}}}{{\Gamma \left( { - \alpha } \right)}}{x^{2m}}\left( {k - j} \right)}  - 2{x^m}\left( k \right)\sum\nolimits_{j = 0}^{k - a} {\frac{{{{\left( {j + 1} \right)}^{\overline { - \alpha  - 1} }}}}{{\Gamma \left( { - \alpha } \right)}}{x^m}\left( {k - j} \right)} \\
 =  - {x^{2m}}\left( k \right) + \sum\nolimits_{j = 1}^{k - a} {\frac{{{{\left( {j + 1} \right)}^{\overline { - \alpha  - 1} }}}}{{\Gamma \left( { - \alpha } \right)}}\left[ {{x^{2m}}\left( {k - j} \right) - 2{x^m}\left( k \right){x^m}\left( {k - j} \right)} \right]}.
\end{array}}
\end{equation}

Because $\alpha\in(0,1)$, then the following equations hold
\begin{equation}\label{Eq69}
{\textstyle \frac{{{{\left( {j + 1} \right)}^{\overline { - \alpha  - 1} }}}}{{\Gamma \left( { - \alpha } \right)}} = \frac{{\Gamma \left( {j - \alpha } \right)}}{{\Gamma \left( { - \alpha } \right)\Gamma \left( {j + 1} \right)}} < 0},
\end{equation}
\begin{equation}\label{Eq70}
{\textstyle 2{x^m}\left( k \right){x^m}\left( {k - j} \right) \le {x^{2m}}\left( k \right) + {x^{2m}}\left( {k - j} \right)},
\end{equation}
for $j=1,2,\cdots,k-a$.

Substituting (\ref{Eq69}) and (\ref{Eq70}) into (\ref{Eq68}) yields
\begin{equation}\label{Eq71}
{\textstyle \begin{array}{l}
{}_a^{\rm{G}}\nabla _k^\alpha {x^{2m}}\left( k \right) - 2{x^m}\left( k \right){}_a^{\rm{G}}\nabla _k^\alpha {x^m}\left( k \right)\\
 \le  - {x^{2m}}\left( k \right) - \sum\nolimits_{j = 0}^{k - a} {\frac{{{{\left( {j + 1} \right)}^{\overline { - \alpha  - 1} }}}}{{\Gamma \left( { - \alpha } \right)}}{x^{2m}}\left( k \right)} \\
 =  - {x^{2m}}\left( k \right)\sum\nolimits_{j = 0}^{k - a} {\frac{{{{\left( {j + 1} \right)}^{\overline { - \alpha  - 1} }}}}{{\Gamma \left( { - \alpha } \right)}}} \\
 =  - {x^{2m}}\left( k \right)\sum\nolimits_{j = 0}^{k - a} {\nabla \frac{{{{\left( {j + 1} \right)}^{\overline { - \alpha } }}}}{{\Gamma \left( { - \alpha  + 1} \right)}}} \\
 =  - {x^{2m}}\left( k \right)\big[ {\frac{{{{\left( {k - a + 1} \right)}^{\overline { - \alpha } }}}}{{\Gamma \left( { - \alpha  + 1} \right)}} - \frac{{{0^{\overline { - \alpha } }}}}{{\Gamma \left( { - \alpha  + 1} \right)}}} \big]\\
 \le 0,
\end{array}}
\end{equation}
where $\frac{{{{\left( {k - a + 1} \right)}^{\overline { - \alpha } }}}}{{\Gamma \left( { - \alpha  + 1} \right)}}=1$ and $\frac{{{0^{\overline { - \alpha } }}}}{{\Gamma \left( { - \alpha  + 1} \right)}}=0$ are adopted here.

When $k=a$, (\ref{Eq63}) can be expressed as
\begin{equation}\label{Eq72}
{\textstyle {x^{2m}}\left( a \right) \le 2{x^m}\left( a \right){x^m}\left( a \right) = 2{x^{2m}}\left( a \right).}
\end{equation}
Till now, the proof of formula (\ref{Eq63}) has been completed.

ii) By applying Lemma \ref{Lemma 1}, one has
\begin{equation}\label{Eq73}
{\textstyle
x\left( k \right){x^{\frac{{2m}}{n} - 1}}\left( {k - j} \right) \le \frac{n}{{2m}}{x^{\frac{{2m}}{n}}}\left( k \right) + \frac{{2m - n}}{{2m}}{x^{\frac{{2m}}{n}}}\left( {k - j} \right).}
\end{equation}
With the help of (\ref{Eq69}) and (\ref{Eq73}), the formula in (\ref{Eq64}) can be updated as
\begin{equation}\label{Eq74}
{\textstyle
\begin{array}{l}
{}_a^{\rm{G}}\nabla _k^\alpha {x^{\frac{{2m}}{n}}}\left( k \right) - \frac{{2m}}{{2m - n}}x\left( k \right){}_a^{\rm{G}}\nabla _k^\alpha {x^{\frac{{2m}}{n} - 1}}\left( k \right)\\
 = \sum\nolimits_{j = 0}^{k - a} {\frac{{{{\left( {j + 1} \right)}^{\overline { - \alpha  - 1} }}}}{{\Gamma \left( { - \alpha } \right)}}{x^{\frac{{2m}}{n}}}\left( {k - j} \right)} \\
\hspace{12pt} - \frac{{2m}}{2m-n}{x}\left( k \right)\sum\nolimits_{j = 0}^{k - a} {\frac{{{{\left( {j + 1} \right)}^{\overline { - \alpha  - 1} }}}}{{\Gamma \left( { - \alpha } \right)}}x^{\frac{{2m}}{n} - 1}\left( {k - j} \right)} \\
\le  - \frac{n}{{2m - n}}{x^{\frac{{2m}}{n}}}\left( k \right) - \sum\nolimits_{j = 1}^{k - a} {\frac{{{{\left( {j + 1} \right)}^{\overline { - \alpha  - 1} }}}}{{\Gamma \left( { - \alpha } \right)}}\frac{n}{{2m - n}}{x^{\frac{{2m}}{n}}}\left( k \right)} \\
 =  - \frac{n}{{2m - n}}{x^{\frac{{2m}}{n}}}\left( k \right)\sum\nolimits_{j = 0}^{k - a} {\frac{{{{\left( {j + 1} \right)}^{\overline { - \alpha  - 1} }}}}{{\Gamma \left( { - \alpha } \right)}}} \\
 =  - \frac{n}{{2m - n}}{x^{\frac{{2m}}{n}}}\left( k \right)\frac{{{{\left( {k - a + 1} \right)}^{\overline { - \alpha } }}}}{{\Gamma \left( { - \alpha  + 1} \right)}}\\
 \le 0,
\end{array}}
\end{equation}
which confirms the result in (\ref{Eq64}) with $k\in\mathbb{N}_{a+1}$.

Given $k=a$, formula (\ref{Eq64}) becomes
\begin{equation}\label{Eq75}
{\textstyle
{x^{\frac{{2m}}{n}}}\left( a \right) \le \frac{{2m}}{{2m - n}}x\left( a \right){x^{\frac{{2m}}{n} - 1}}\left( a \right) = \frac{{2m}}{{2m - n}}{x^{\frac{{2m}}{n}}}\left( a \right),}
\end{equation}
where $\frac{{2m}}{{2m - n}}>1$. Consequently, (\ref{Eq64}) holds for any $k\in\mathbb{N}_{a}$.

iii) In a similar way, the proof of (\ref{Eq65}) can be preformed as follows
\begin{equation}\label{Eq76}
{\textstyle
\begin{array}{l}
{}_a^{\rm{G}}\nabla _k^\alpha {x^{\frac{{2m}}{n}}}\left( k \right) - \frac{{2m}}{{n}}{x^{\frac{{2m}}{n} - 1}}\left( k \right){}_a^{\rm{G}}\nabla _k^\alpha x\left( k \right)\\
 = \sum\nolimits_{j = 0}^{k - a} {\frac{{{{\left( {j + 1} \right)}^{\overline { - \alpha  - 1} }}}}{{\Gamma \left( { - \alpha } \right)}}{x^{\frac{{2m}}{n}}}\left( {k - j} \right)} \\
\hspace{12pt} - \frac{{2m}}{n}{x^{\frac{{2m}}{n} - 1}}\left( k \right)\sum\nolimits_{j = 0}^{k - a} {\frac{{{{\left( {j + 1} \right)}^{\overline { - \alpha  - 1} }}}}{{\Gamma \left( { - \alpha } \right)}}x\left( {k - j} \right)} \\
 = \sum\nolimits_{j = 1}^{k - a} {\frac{{{{\left( {j + 1} \right)}^{\overline { - \alpha  - 1} }}}}{{\Gamma \left( { - \alpha } \right)}}\big[ {{x^{\frac{{2m}}{n}}}\left( {k - j} \right) - \frac{{2m}}{n}{x^{\frac{{2m}}{n} - 1}}\left( k \right)x\left( {k - j} \right)} \big]} \\
\hspace{12pt} - \frac{{2m - n}}{n}{x^{\frac{{2m}}{n}}}\left( k \right)\\
 \le  - \frac{{2m - n}}{n}{x^{\frac{{2m}}{n}}}\left( k \right) - \sum\nolimits_{j = 1}^{k - a} {\frac{{{{\left( {j + 1} \right)}^{\overline { - \alpha  - 1} }}}}{{\Gamma \left( { - \alpha } \right)}}\frac{{2m - n}}{n}{x^{\frac{{2m}}{n}}}\left( k \right)} \\
 =  - \frac{{2m - n}}{n}{x^{\frac{{2m}}{n}}}\left( k \right)\sum\nolimits_{j = 0}^{k - a} {\frac{{{{\left( {j + 1} \right)}^{\overline { - \alpha  - 1} }}}}{{\Gamma \left( { - \alpha } \right)}}} \\
 =  - \frac{{2m - n}}{n}{x^{\frac{{2m}}{n}}}\left( k \right)\frac{{{{\left( {k - a + 1} \right)}^{\overline { - \alpha } }}}}{{\Gamma \left( { - \alpha  + 1} \right)}}\\
 \le 0.
\end{array}}
\end{equation}

When $k=a$, one obtains
\begin{equation}\label{Eq77}
{\textstyle {x^{\frac{{2m}}{n}}}\left( a \right) \le \frac{{2m}}{n}{x^{\frac{{2m}}{n} - 1}}\left( a \right)x\left( a \right) = \frac{{2m}}{n}{x^{\frac{{2m}}{n}}}\left( a \right)},
\end{equation}
where $\frac{{2m}}{n}\ge1$. All of these establish formula (\ref{Eq65}).

iv) Formulas (\ref{Eq63})-(\ref{Eq65}) can also converge as
\begin{equation}\label{Eq78}
{\textstyle {}_a^{\rm G}\nabla _k^\alpha {x^{2}}\left( k \right) \le 2x\left( k \right){}_a^{\rm G}\nabla _k^\alpha x\left( k \right)},
\end{equation}
when $m=n=1$. On this basis, adopting the proof method in Theorem \ref{Theorem 1}, the remainder proof on (\ref{Eq67}) and (\ref{Eq68}) has been completed successfully. For the sake of brevity, it is omitted here.
\end{proof}

\subsection{Natural extension}
With the Gr\"{u}nwald--Letnikov definition in (\ref{Eq5}), the needed initial conditions for ${}_a^{\rm C}\nabla _k^\alpha {x}\left( k \right)=f\left( x \right)$ is $x\left( a-1 \right)$ and its difference at $k=a-1$. To make full use of the initial condition at $k=a$, the Gr\"{u}nwald--Letnikov definition is modified by
\begin{equation}\label{Eq79}
{\textstyle {}_{a}^{\rm G}{\nabla}_{k}^{\alpha} f\left( k \right) \triangleq  \sum\nolimits_{j = 0}^{k-a-1} {{{\left( { - 1} \right)}^j}\big( {\begin{smallmatrix}
\alpha \\
j
\end{smallmatrix}} \big)f\left( {k - j} \right)}},
\end{equation}
where $\alpha\in\mathbb{R}$, $a\in\mathbb{R}$ and $ k\in {\mathbb N}_{a} $.

When $\alpha<0$, formula (\ref{Eq79}) is equal to the Riemann--Liouville sum in \cite{Wei:2019CNSNS}. On this basis, Riemann--Liouville difference and Caputo difference can be defined by using (\ref{Eq7}) and (\ref{Eq8}), respectively. Notably, for these newly built fractional differences with initial instant changed, Theorem \ref{Theorem 1} - Theorem \ref{Theorem 3} still hold.

Assume $ f:{\mathbb N}_{a-K} \to {\mathbb R}$, $\alpha\in\mathbb{R}$, $K\in {\mathbb Z}_{+} $, $k\in {\mathbb N}_{a} $ and $a\in\mathbb{R}$. Then, Gr\"{u}nwald--Letnikov fractional sum/differece can be defined by
\begin{equation}\label{Eq80}
{\textstyle {}_{k-K}^{\hspace{11pt}\rm G}{\nabla}_{k}^{\alpha} f\left( k \right) \triangleq  \sum\nolimits_{j = 0}^{K} {{{\left( { - 1} \right)}^j}\big( {\begin{smallmatrix}
\alpha \\
j
\end{smallmatrix}} \big)f\left( {k - j} \right)}},
\end{equation}

For $f : \mathbb{N}_{a-K-n} \to \mathbb{R}$, $n-1<\alpha<n$, $n\in\mathbb{Z}_+$, $K\in {\mathbb Z}_{+} $, $k\in {\mathbb N}_{a} $ and $a\in\mathbb{R}$, Riemann--Liouville difference and Caputo difference can be defined correspondingly
\begin{equation}\label{Eq81}
{}_{k-K}^{\hspace{11pt}\rm R}{\nabla}_k ^{\alpha} f\left( k \right) \triangleq {\nabla ^{n}}{}_{k-K}^{\hspace{11pt}\rm G}{\nabla}_k ^{ \alpha-n }f\left(k \right),
\end{equation}
\begin{equation}\label{Eq82}
{}_{k-K}^{\hspace{11pt}\rm C}{\nabla }_k ^{\alpha} f\left( k \right) \triangleq {}_{k-K}^{\hspace{11pt}\rm G}{\nabla}_k ^{ \alpha-n }{\nabla ^{n}}f\left( k \right).
\end{equation}
Notably, for these newly built fractional differences with fixed memory step in (\ref{Eq80})-(\ref{Eq82}), the similar results as Theorems \ref{Theorem 1}-\ref{Theorem 3} still hold.

Similarly, assume $ f:{\mathbb N}_{a} \to {\mathbb R}$, $\alpha(k)\in\mathbb{R}$, $k\in {\mathbb N}_{a} $ and $a\in\mathbb{R}$. Define Gr\"{u}nwald--Letnikov fractional sum/differece as
\begin{equation}\label{Eq83}
{\textstyle {}_{a}^{\rm G}{\nabla}_{k}^{\alpha(k)} f\left( k \right) \triangleq  \sum\nolimits_{j = 0}^{k-a} {{{\left( { - 1} \right)}^j}\big( {\begin{smallmatrix}
\alpha(k) \\
j
\end{smallmatrix}} \big)f\left( {k - j} \right)}},
\end{equation}
Furthermore, for $f : \mathbb{N}_{a-n(k)} \to \mathbb{R}$, $n(k)-1<\alpha(k) <n(k)$, $n(k)\in\mathbb{Z}_+$, $k\in {\mathbb N}_{a} $, $a\in\mathbb{R}$, Riemann--Liouville difference and Caputo difference can be defined
\begin{equation}\label{Eq84}
{}_a^{\rm R}{\nabla}_k ^{\alpha(k)} f\left( k \right) \triangleq {\nabla ^{n(k)}}{}_a^{\rm G}{\nabla}_k ^{ \alpha(k)-n(k) }f\left(k \right),
\end{equation}
\begin{equation}\label{Eq85}
{}_a^{\rm C}{\nabla }_k ^{\alpha(k)} f\left( k \right) \triangleq {}_a^{\rm G}{\nabla}_k ^{ \alpha(k)-n(k) }{\nabla ^{n(k)}}f\left( k \right),
\end{equation}
respectively. Notably, for these newly built fractional differences with time varying order in (\ref{Eq83}) and (\ref{Eq85}), the similar results as Theorem \ref{Theorem 1} and Theorem \ref{Theorem 3} still hold. Sadly, the corresponding result with Riemann--Liouville definition in  (\ref{Eq84}) is hard to develop. In pursuit of clear and concise, the related context is not provided here.

\begin{remark}\label{Remark 3}
Theorems \ref{Theorem 1}-\ref{Theorem 3} and Corollary \ref{Corollary 1} build the bridge from the fractional difference of the Lyapunov function to the system equation. With the newly built Lyapunov inequalities, the stability of the related system can be well solved. Interestingly, the three theorems exactly share the same form, which illustrates the uniformity of the considered definitions. Additionally, the elaborated results for nabla discrete fractional order systems might be beneficial for the continuous case.
\end{remark}
\section{Numerical Simulation} \label{Section 4}
In this section, three examples are given to show that it is convenient and efficient to validate the stability of nabla discrete fractional systems by using the proposed method. Note that neither the methods in \cite{Baleanu:2017CNSNS} nor \cite{Wu:2017AMC} are able to check the stability of the elaborated system. Accordingly, only the proposed methods are performed here.
\begin{example}
Consider the following system with $0<\alpha<1$
\begin{equation}\label{Eq86}
{\textstyle \left\{ \begin{array}{l}
{}_a^{\rm{C}}\nabla _k^\alpha {x_1}\left( k \right) =  - {x_1}\left( k \right) + x_2^3\left( k \right),\\
{}_a^{\rm{C}}\nabla _k^\alpha {x_2}\left( k \right) =  - {x_1}\left( k \right) - {x_2}\left( k \right),
\end{array} \right.}
\end{equation}
and select the Lyapunov candidate function as
\begin{equation}\label{Eq87}
{\textstyle V\left( k \right) = \frac{1}{2}x_1^2\left( k \right) + \frac{1}{4}x_2^4\left( k \right)}.
\end{equation}
Now, applying Theorem \ref{Theorem 1}, it can be found that
\begin{equation}\label{Eq88}
{\textstyle\begin{array}{rl}
{}_a^{\rm{C}}\nabla _k^\alpha V\left( k \right) \le&\hspace{-6pt} {x_1}\left( k \right){}_a^{\rm{C}}\nabla _k^\alpha {x_1}\left( k \right) + x_2^3\left( k \right){}_a^{\rm{C}}\nabla _k^\alpha {x_2}\left( k \right)\\
 = &\hspace{-6pt} - x_1^2\left( k \right) - x_2^4\left( k \right)\\
\le &\hspace{-6pt} 0.
\end{array}}
\end{equation}
As can be seen from (\ref{Eq88}), the fractional difference of the Lyapunov function is negative definite. By applying the Caputo case of Theorem 3.6 in \cite{Wu:2017AMC}, it can be concluded that the system (\ref{Eq86}) is asymptotically stable and the origin is the equilibrium point.

Fig. \ref{Fig 1} shows the evolution of the system state of (\ref{Eq86}) with $\alpha=0.8$, $a=0$, $ x_1(-1)=2$ and $x_2(-1)=-1$. As expected from the analytical analysis already presented, the system is asymptotically stable.

\begin{figure}[!htbp]
  \centering
  \includegraphics[width=0.8\textwidth]{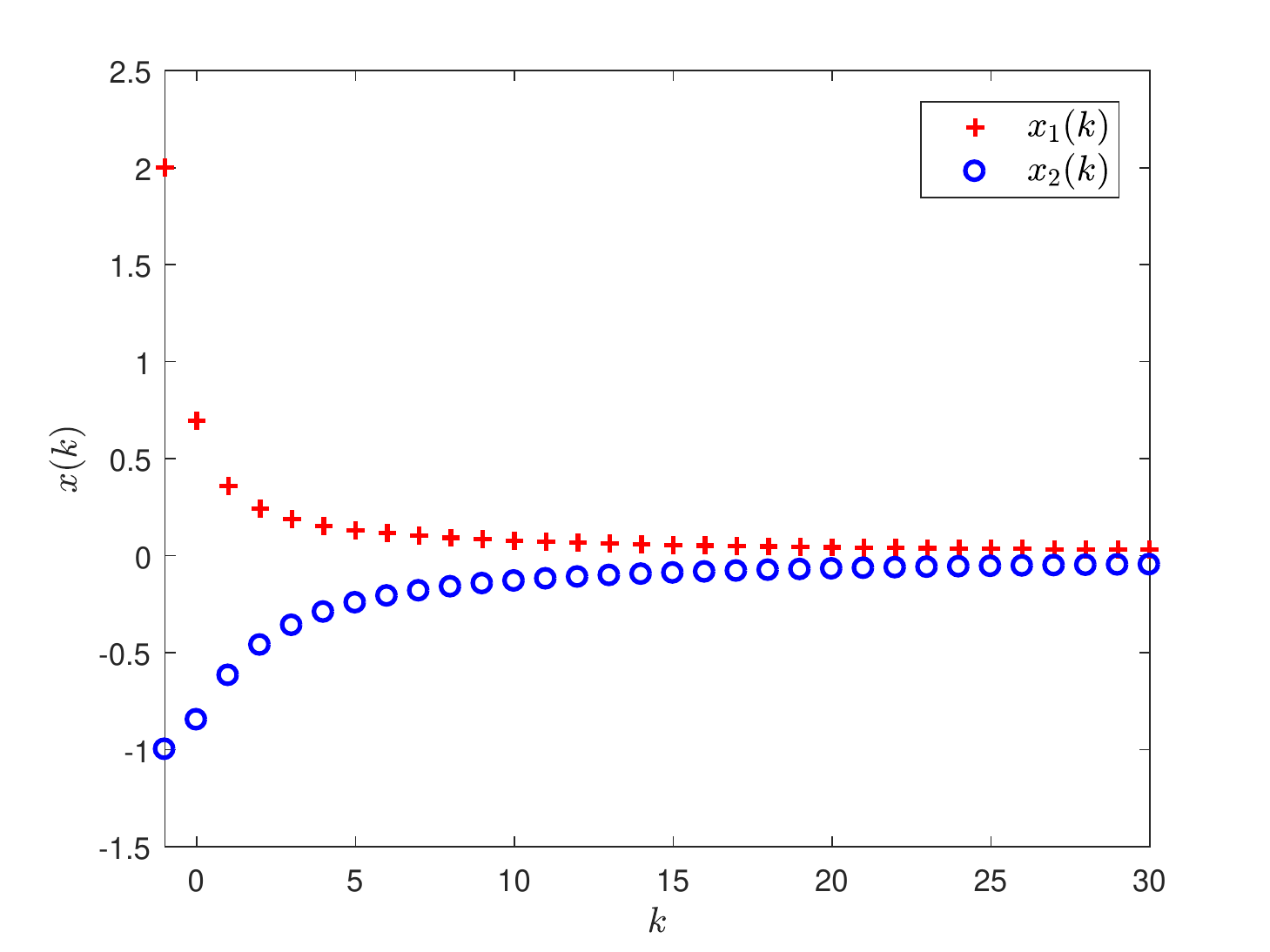}
  \caption{State response of the system in (\ref{Eq86}).}\label{Fig 1}
\end{figure}
\end{example}
\begin{example}
The previous example is borrowed from reference \cite{Aguila:2014CNSNS}. In order to make fully examine the obtained theoretical result, let us consider the following modified example
\begin{equation}\label{Eq89}
{\textstyle \left\{ \begin{array}{l}
{}_a^{\rm{C}}\nabla _k^\alpha {x_1}\left( k \right) =  - {x_1}\left( k \right) + x_2^{\frac{1}{3}}\left( k \right),\\
{}_a^{\rm{C}}\nabla _k^\alpha {x_2}\left( k \right) =  - x_1^{\frac{1}{5}}\left( k \right) - {x_2}\left( k \right),
\end{array} \right.}
\end{equation}
where $\alpha=0.8$, $a=0$, $ x_1(-1)=2$ and $x_2(-1)=-1$.

Using the following Laypunov function
\begin{equation}\label{Eq90}
{\textstyle V\left( k \right) = \frac{5}{6}x_1^{\frac{6}{5}}\left( k \right) + \frac{3}{4}x_2^{\frac{4}{3}}\left( k \right)},
\end{equation}
and following a similar procedure to analyze its fractional difference by applying (\ref{Eq16}), one obtains

\begin{equation}\label{Eq91}
{\textstyle \begin{array}{rl}
{}_a^{\rm{C}}\nabla _k^\alpha V\left( k \right) \le  &\hspace{-6pt} x_1^{\frac{1}{5}}\left( k \right){}_a^{\rm{C}}\nabla _k^\alpha {x_1}\left( k \right) + x_2^{\frac{1}{3}}\left( k \right){}_a^{\rm{C}}\nabla _k^\alpha {x_2}\left( k \right)\\
 = &\hspace{-6pt}  - x_1^{\frac{6}{5}}\left( k \right) - x_2^{\frac{4}{3}}\left( k \right)\\
\le  &\hspace{-6pt} 0.
\end{array}}
\end{equation}
It can be concluded that the system in (\ref{Eq89}) is stable in the Lyapunov sense. Fig. \ref{Fig 2} just illustrates this result.

\begin{figure}[!htbp]
  \centering
  \includegraphics[width=0.8\textwidth]{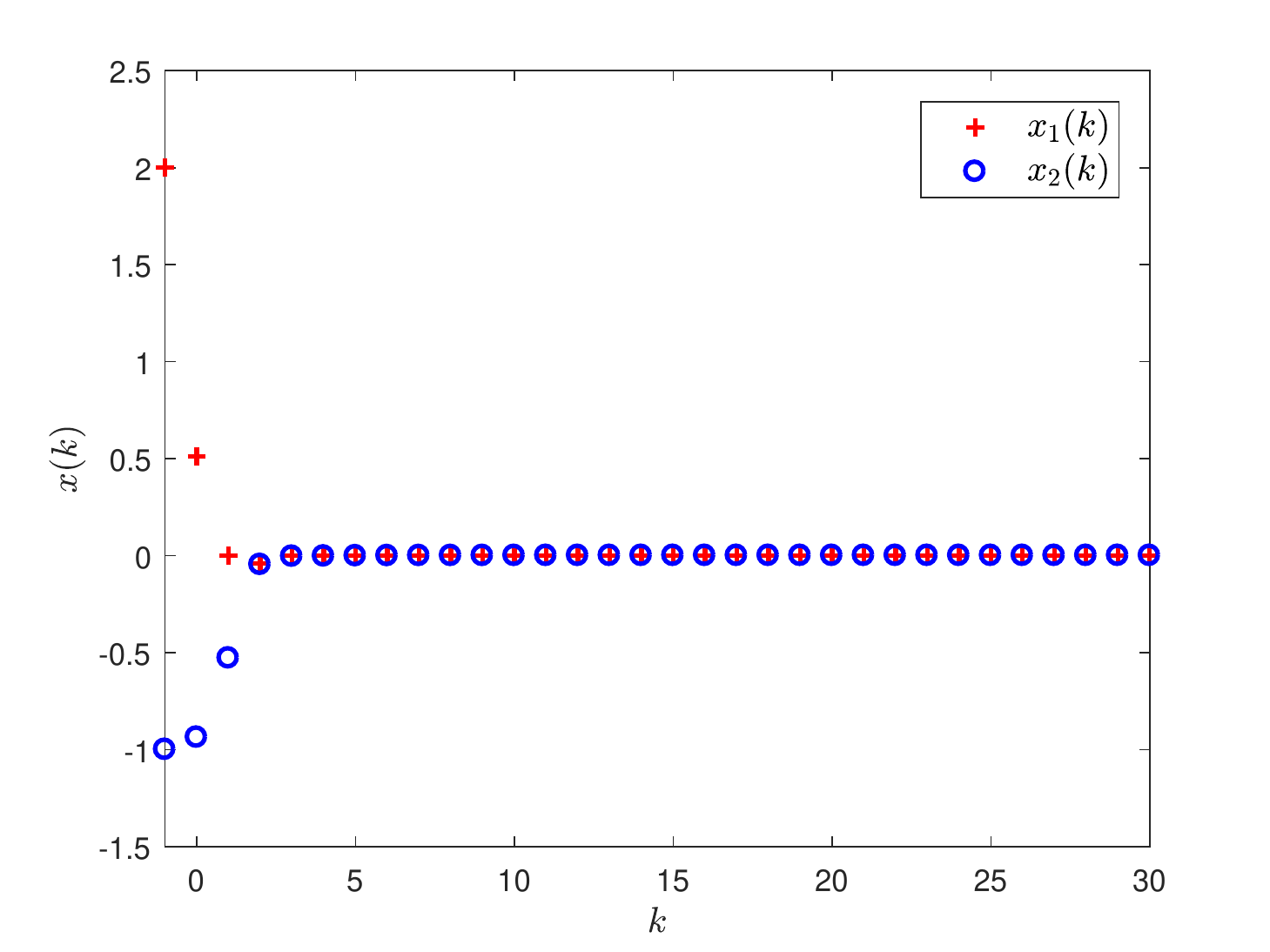}
  \caption{State response of the system in (\ref{Eq89}).}\label{Fig 2}
\end{figure}
\end{example}
\begin{example}
This example focuses on the multivariable optimization problem
\begin{equation}\label{Eq92}
{\textstyle { f\left( {x_1,x_2} \right) = {\left( {x_1 - 1} \right)^2} + 2{\left( {{x_1^2} - x_2} \right)^2}}}.
\end{equation}
Consider the fractional order gradient algorithm, i.e.,
\begin{equation}\label{Eq93}
{\textstyle \left\{ \begin{array}{l}
{}_a^{\rm{C}}\nabla _k^\alpha x_1\left( k \right) =  - \rho \frac{\partial }{{\partial x_1}}f\left( {x_1,x_2} \right),\\
{}_a^{\rm{C}}\nabla _k^\alpha x_2\left( k \right) =  - \rho \frac{\partial }{{\partial x_2}}f\left( {x_1,x_2} \right),
\end{array} \right.}
\end{equation}
where $\alpha=0.8$, $a=0$, $\rho=2$, $ x_1(-1)=2$ and $x_2(-1)=-1$.

Taking into account that the optimal value of $(x_1,x_2)$ is not zero, define $y_1\left( k \right) = x_1\left( k \right) - 1$ and $y_2\left( k \right) = x_2\left( k \right) - 1$.
Choosing the Laypunov function
\begin{equation}\label{Eq94}
{\textstyle V\left( k \right) = \frac{1}{4}{y_1^2}\left( k \right) + \frac{1}{2}{y_2^2}\left( k \right)},
\end{equation}
and using (\ref{Eq51}), it follows
\begin{equation}\label{Eq95}
{\textstyle \begin{array}{rl}
{}_a^{\rm C}\nabla _k^\alpha V\left( k \right) \le&\hspace{-6pt} \frac{1}{2}y_1\left( k \right){}_a^{\rm C}\nabla _k^\alpha y_1\left( k \right) + y_2\left( k \right){}_a^{\rm C}\nabla _k^\alpha y_2\left( k \right)\\
 =&\hspace{-6pt}  - \rho {\left[ {2{y_1^2}\left( k \right) + 3y_1\left( k \right) - 2y_2\left( k \right)} \right]^2}\\
 \le&\hspace{-6pt} 0.
\end{array}}
\end{equation}
Hence, one can conclude that the algorithm in (\ref{Eq93}) could find the desired point $(1,1)$ asymptotically. The corresponding curves in Fig. \ref{Fig 3} illuminate the results clearly.
\begin{figure}[!htbp]
  \centering
  \includegraphics[width=0.8\textwidth]{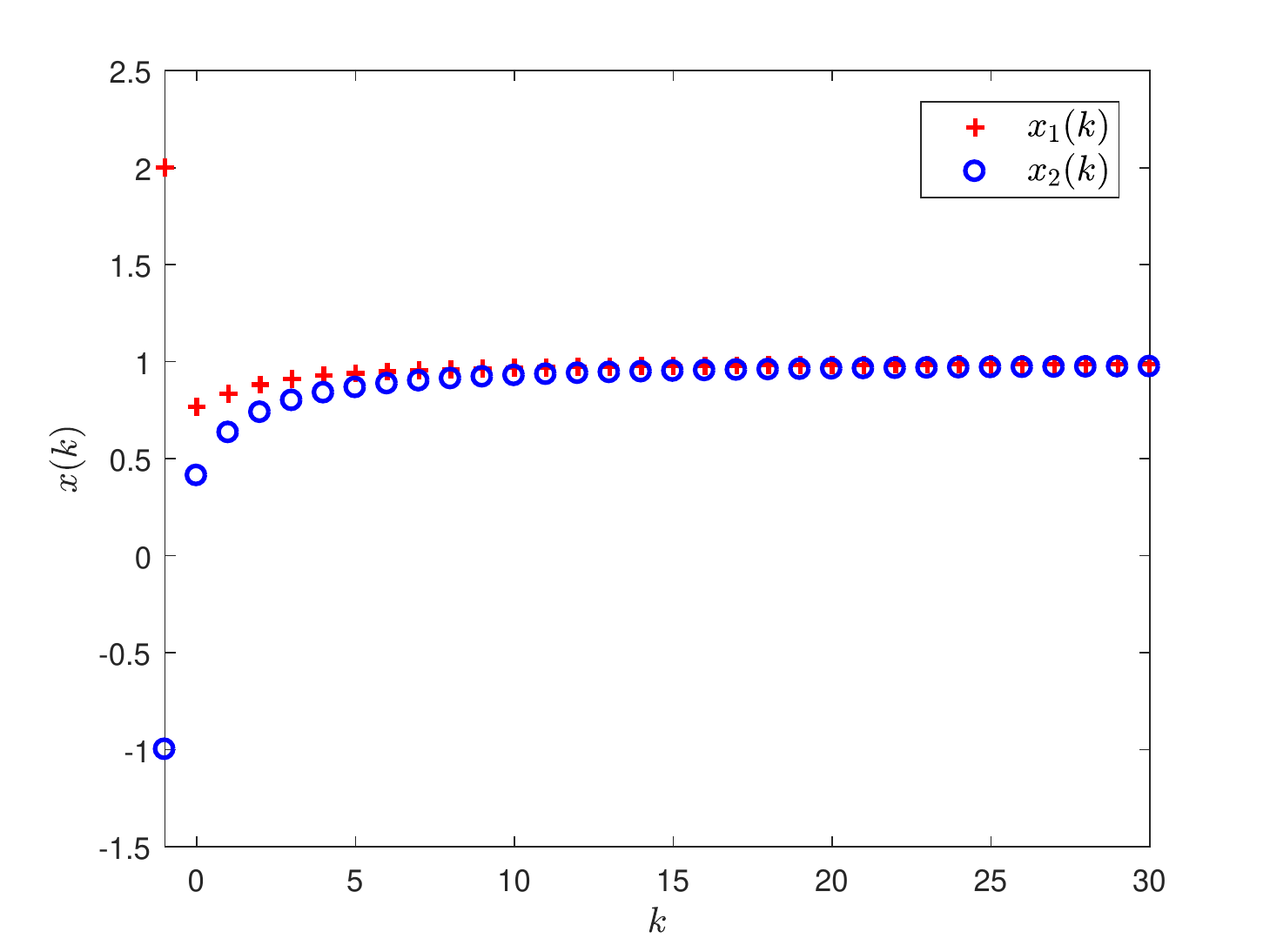}
  \caption{Search process of the algorithm in (\ref{Eq93}).}\label{Fig 3}
\end{figure}
\end{example}
\section{Conclusions}\label{Section 5}
In this paper, several useful inequalities on fractional difference of Lyapunov functions have been investigated. Note that, all the inequalities are applicable for Riemann--Liouville, Caputo and Gr\"{u}nwald--Letnikov definitions. Applying these results, the classical Lyapunov theory can be used to analyze the stability of discrete fractional nonlinear systems without and with delays. Three examples have shown that it is effective to check stability of diacrete fractional systems by using the proposed theory. It is believed that this work provides a convenient tool for analysis and synthesis for such discrete fractional calculus. Future research directions include discussing the inequalities on distributed order systems, incommensurate order systems, singular systems and time delay systems.

\section*{Acknowledgements}
The authors would like to thank the Associate Editor and the anonymous reviewers for their keen and insightful comments which greatly improved the contents and presentation. The work described in this paper was fully supported by the National Natural Science Foundation of China (No. 61601431, No. 61573332), the Anhui Provincial Natural Science Foundation (No. 1708085QF141), the Fundamental Research Funds for the Central Universities (No. WK2100100028) and the General Financial Grant from the China Postdoctoral Science Foundation (No. 2016M602032).

\section*{References}
\phantomsection
\addcontentsline{toc}{section}{References}
\bibliographystyle{model1-num-names}
\bibliography{database}

\end{document}